\DeclareMathOperator{\Diff}{Diff}
\DeclareMathOperator{\Aut}{Aut}
\DeclareMathOperator{\GL}{GL}
\DeclareMathOperator{\SL}{SL}
\DeclareMathOperator{\hol}{hol}
\DeclareMathOperator{\tr}{tr}
\newcommand{\OO}{\operatorname{O}}
\newcommand{\ZZ}{{\mathbb  Z}}
\newcommand{\RR}{{\mathbb  R}}
\newcommand{\TT}{{\mathbb  T}}
\newcommand{\F}{{\mathcal F}}
\newtheorem{theorem}{Theorem}[section]
\newtheorem*{theorem*}{Theorem}
\newtheorem{corollary}[theorem]{Corollary}
\newtheorem{lemma}[theorem]{Lemma}
\newtheorem{proposition}[theorem]{Proposition}
\theoremstyle{definition}
\newtheorem{definition}[theorem]{Definition}
\newtheorem{example}[theorem]{Example}
\newtheorem*{question}{Question}
\theoremstyle{remark}
\newtheorem{remark}[theorem]{Remark}
\newtheorem*{org}{Organization of the article}
\begin{document}
\title{Tenseness of Riemannian flows}

\author{Hiraku Nozawa}
\address{Hiraku Nozawa, Department of Mathematical Sciences, Faculty of Science and Engineering, Ritsumeikan University, Nozihigashi 1-1-1, Kusatsu, Shiga, 525-8577, Japan}
\email{hnozawa@fc.ritsumei.ac.jp}
\thanks{The first author is supported by the EPDI/JSPS/IH\'{E}S Fellowship and the Spanish MICINN grant MTM2011-25656. This paper was written during the stay of the first author at Centre de Recerca Matem\`{a}tica (Bellaterra, Spain), Institut Mittag-Leffler (Djursholm, Sweden) and Institut des Hautes \'{E}tudes Scientifiques (Bures-sur-Yvette, France); he is very grateful for their hospitality.}
\author{Jos\'{e} Ignacio Royo Prieto}
\address{Jos\'{e} Ignacio Royo Prieto, Departamento de Matem\'{a}tica Aplicada, Universidad del Pa\'{i}s Vasco UPV/EHU, Alameda de Urquijo s/n 48013 Bilbao, Spain}
\email{joseignacio.royo@ehu.es}
\thanks{The second author is partially supported by the UPV/EHU grants EHU09/04 and EHU12/05 and by the Spanish MICINN grant MTM2010-15471. This paper was written during the stay of the second author at Centre de Recerca Matem\`{a}tica (Bellaterra, Spain); he is very grateful for their hospitality.}
\thanks{The authors wish to thank the referee for the useful remarks and suggestions in order to improve this paper.}

\begin{abstract}
We show that any transversally complete Riemannian foliation $\F$ of dimension one on any possibly non-compact manifold $M$ is tense; namely, $(M,\mathcal{F})$ admits a Riemannian metric such that the mean curvature form of $\F$ is basic. This is a partial generalization of a result of Dom\'inguez, which says that any Riemannian foliation on any compact manifold is tense. Our proof is based on some results of Molino and Sergiescu, and it is simpler than the original proof by Dom\'inguez. As an application, we generalize some well known results including Masa's characterization of tautness.
\end{abstract}
\maketitle

\section{Introduction}

\subsection{Background}
A foliated manifold $(M,\mathcal{F})$ is called {\em taut} if $M$ admits a metric $g$ such that every leaf of $\mathcal{F}$ is a minimal submanifold of $(M,g)$; in other words, $M$ admits a metric such that the mean curvature form of $\F$ is trivial. The tautness of foliated manifolds has been studied from the dynamical or geometric point of view after the characterization of tautness in terms of foliation cycles due to Sullivan~\cite{Sullivan}. For Riemannian foliations, tautness is remarkably of topological nature, and its relation to cohomology has been studied by many authors~\cite{Kamber Tondeur,Carriere,Ghys,Haefliger 2,Molino Sergiescu,El Kacimi Hector,Masa,Alvarez Lopez}. In particular, as conjectured by Carri\`ere~\cite{Carriere} and finally proved by Masa~\cite{Masa} using Sarkaria's smoothing operator~\cite{Sarkaria}, an oriented and transversally oriented Riemannian foliation is taut if and only if the top degree component of the basic cohomology is nontrivial. \'{A}lvarez L\'{o}pez~\cite{Alvarez Lopez} defined the so-called {\em \'{A}lvarez class} to characterize tautness of Riemannian foliations and removed the assumption of the orientability from Masa's characterization.

Based on these works on tautness of Riemannian foliations, Dom\'inguez proved the following result.
\begin{theorem}[{\cite[Tenseness Theorem in p.~1239]{Dominguez}}]\label{Theorem : Tenseness}
Any Riemannian foliation on a closed manifold is tense.
\end{theorem}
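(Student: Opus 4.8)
The plan is to start from an arbitrary bundle-like metric and correct its mean curvature form by a controlled deformation of the metric, using the transverse Hodge theory available on a closed manifold. Fix a bundle-like metric $g$ for $(M,\F)$ and let $\kappa$ be the associated mean curvature $1$-form; since the mean curvature vector is normal to the leaves, $\kappa$ is horizontal, i.e. $i_X\kappa=0$ for every $X$ tangent to $\F$. On a closed manifold the space of basic forms carries an $L^2$-orthogonal projection $P$ onto $\Omega_b(\F)$, coming from the transverse Hodge theory of the (transversally elliptic) basic Laplacian \cite{El Kacimi Hector}; compactness is essential here, as it provides discreteness of the spectrum and the Rellich-type estimates needed for the projection to exist. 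Write $\kappa=\kappa_b+\kappa_0$ with $\kappa_b:=P\kappa$ basic and $\kappa_0$ orthogonal to all basic forms. The first input I would invoke is the theorem of \'{A}lvarez L\'{o}pez \cite{Alvarez Lopez} that $\kappa_b$ is closed, so that its basic cohomology class (the \'{A}lvarez class) is a genuine invariant; the whole point of tenseness is to realize $\kappa_b$ itself as a mean curvature form.

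The reduction step is a deformation that leaves the transverse geometry untouched. Scaling $g$ only along the leaves, $g\mapsto g_u$ with $g_u|_{T\F}=e^{2u}g|_{T\F}$ and $g_u|_{N\F}=g|_{N\F}$ for $u\in C^\infty(M)$, preserves the bundle-like condition, because the induced transverse metric on $N\F$ is unchanged. A Rummler-type computation with the characteristic form $\chi$ (for which $g_u$ multiplies $\chi$ by $e^{pu}$, $p=\dim\F$) shows that the new mean curvature form is $\kappa-p\,(du)^{N}$, where $(du)^{N}$ denotes the component of $du$ in $N^*\F$. Hence $(M,\F)$ is tense as soon as one can solve $p\,(du)^{N}=\kappa_0$ for some $u\in C^\infty(M)$, since then $g_u$ has mean curvature $\kappa-\kappa_0=\kappa_b$, which is basic.

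The main obstacle is precisely the solvability of $p\,(du)^{N}=\kappa_0$, that is, realizing the non-basic part of the mean curvature as the normal component of a gradient. I would attack it by Hodge-theoretic means: the $L^2$-adjoint of the operator $u\mapsto(du)^{N}$ is the codifferential restricted to horizontal forms, so the image is controlled by the horizontal forms $\alpha$ with $\delta\alpha=0$, and one must check both that $\kappa_0$ is orthogonal to this obstruction space and that the resulting weak solution is smooth. This is where the deeper structure behind \cite{Alvarez Lopez} enters, together with elliptic regularity on the closed manifold $M$. I expect the purely leafwise-conformal deformation to be too rigid to annihilate $\kappa_0$ in one stroke, so the subtle point — and the reason the original argument of \cite{Dominguez} is delicate — is to enlarge the class of admissible bundle-like deformations (for instance by also adjusting the horizontal complement of $T\F$) and to organize the correction either as a single transversally elliptic solve or as a convergent iterative process. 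Once $\kappa_0$ is absorbed, the resulting bundle-like metric has basic mean curvature form, which proves tenseness.
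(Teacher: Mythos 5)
Your setup is correct as far as it goes: the decomposition $\kappa=\kappa_b+\kappa_0$ via the $L^2$ projection onto basic forms, the closedness of $\kappa_b$ from \cite{Alvarez Lopez}, and the computation that the leafwise conformal rescaling $g\mapsto g_u$ replaces $\kappa$ by $\kappa-p\,(du)^{N}$ are all fine. But the argument stops exactly where the theorem begins. The equation $p\,(du)^{N}=\kappa_0$ is an overdetermined first-order system, and its solvability is never established; indeed it cannot hold in general, since it would force $\kappa_0$ to be the horizontal component of an exact $1$-form, and nothing you have proved (orthogonality of $\kappa_0$ to basic forms is an $L^2$ statement) gives such exactness. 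You acknowledge this yourself and propose to ``enlarge the class of admissible bundle-like deformations'' and to run ``a single transversally elliptic solve or a convergent iterative process,'' but none of this is carried out. Absorbing $\kappa_0$ by deformations that also move the horizontal distribution is precisely the delicate content of Dom\'inguez's proof, where Sarkaria's smoothing operator \cite{Sarkaria} is what makes the iteration work; naming that difficulty is not the same as resolving it. So what you have is a correct reduction plus a statement of the hard problem, not a proof.

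For comparison: this paper does not reprove the statement in that generality either --- Theorem~\ref{Theorem : Tenseness} is quoted from \cite{Dominguez}. What the paper proves (Theorem~\ref{Theorem : Strong tenseness}) is the dimension-one case, valid even for non-compact transversally complete foliations, by a route that avoids your analytic obstacle entirely: the dichotomy of Theorem~\ref{Theorem: dichotomy} reduces to the case where all leaf closures are compact; Molino's structure theorem (Theorem~\ref{Theorem: Molino}) converts the problem into one about $(\TT^k,\F_v)$-bundles; the Molino--Sergiescu retraction of $\Diff(\TT^k,\F_v)$ onto $\GL_v(k;\ZZ)\ltimes\TT^k$ (Proposition~\ref{Proposition : Retraction 1}, from \cite{Molino Sergiescu}) reduces the structure group; and a strongly tense metric is then produced by averaging characteristic forms over compact group actions on a suitable covering (Theorem~\ref{thm:tense2}, with Lemmas~\ref{lema:reduction} and~\ref{lem:char}). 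No Hodge theory, no PDE, no smoothing operator. To complete your approach you would have to either reproduce Dom\'inguez's analysis, or, in the dimension-one setting, switch to this structure-theoretic argument.
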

Here, recall that a foliated manifold $(M,\mathcal{F})$ is called {\em tense} if $M$ admits a metric such that the mean curvature form of $\F$ is basic. Theorem~\ref{Theorem : Tenseness} can be regarded as a generalization of Masa's characterization of tautness, and has many applications in the study of geometrical and cohomological properties of Riemannian foliations (see, for example,~\cite{Kamber Tondeur 2, Kamber Tondeur 3,Tondeur killing,royo,royo-saralegi-wolak1,royo-saralegi-wolak2}).

\subsection{Main result}\label{sec:dic}
In this article we will generalize Theorem~\ref{Theorem : Tenseness} to Riemannian foliations of dimension one on possibly non-compact manifolds.

There is one remarkable difference between the non-compact and the compact cases. By~\cite[Eq.~4.4]{Kamber Tondeur 2}, the mean curvature form $\kappa$ of a Riemannian foliation with a tense metric on a compact manifold is always closed. On the other hand, there exists a Riemannian foliation on a non-compact manifold with a tense metric whose $\kappa$ is not closed~\cite[Example~2.4]{Cairns Escobales}. Based on this fact, we say that a foliated manifold $(M,\F)$ is {\em strongly tense} if $M$ admits a Riemannian metric such that the mean curvature form of $\F$ is basic and closed.

The main result of this paper is the following.
\begin{theorem}\label{Theorem : Strong tenseness}
Any transversally complete Riemannian foliation of dimension one is strongly tense.
\end{theorem}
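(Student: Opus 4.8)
The plan is to start from an arbitrary bundle-like metric $g$ for $\F$ and to modify it only in the direction of the leaves (a leafwise conformal change), so as to preserve the transverse metric — and hence the property of being Riemannian — while controlling the mean curvature. Write $V$ for the $g$-unit field tangent to $\F$ and $\alpha=V^\flat$ for its dual $1$-form, the characteristic form of the flow. A direct computation gives $\kappa=\iota_V d\alpha$, so that $\iota_V\kappa=0$ automatically and, by Cartan's formula, $\kappa$ is basic if and only if $\iota_V d\kappa=0$. Replacing $V$ by $e^{f}V$ (equivalently $\alpha$ by $e^{-f}\alpha$) for a smooth function $f$ keeps the metric bundle-like and changes the mean curvature to
\[
\bar\kappa=\kappa+df-(Vf)\,\alpha .
\]
Thus the whole problem reduces to choosing $f$ so that the leafwise variation $\iota_V d\bar\kappa$ vanishes, and then checking that the resulting form is closed.

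To pin down $f$ I would appeal to the transverse structure supplied by transverse completeness. By Molino's theory together with the results of Molino and Sergiescu for one-dimensional Riemannian foliations, the closures of the leaves of a transversally complete Riemannian flow form a singular Riemannian foliation $\overline{\F}$; on each closure $\F$ restricts to a Lie foliation with dense leaves whose structural Lie algebra is abelian (being generated by the closure of a single one-parameter subgroup), and the transverse structure is invariant under the associated local transverse Killing fields. This provides an averaging operator $P$ sending a form to its mean along the leaf closures. Crucially, it is transverse completeness, via the Molino--Sergiescu description of the commuting sheaf, that makes $P$ well defined and smoothing even when the closures are non-compact, replacing the compactness used by Dom\'inguez. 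The averaged form $\kappa_b:=P\kappa$ is then basic and satisfies $\iota_V\kappa_b=0$.

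I would next solve $\bar\kappa=\kappa_b$ for $f$. Since $P\kappa_b=\kappa_b$, the difference $\kappa-\kappa_b$ has vanishing mean along every leaf closure, which is exactly the integrability condition needed to solve the resulting first-order equation for $f$ along the leaves; the abelianness of the structural algebra reduces this to a transport equation along the dense orbits, and transverse completeness guarantees a global smooth solution defining an honest bundle-like metric. With such an $f$ the modified metric has mean curvature $\bar\kappa=\kappa_b$, which is basic, so $(M,\F)$ is tense.

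Finally, for strong tenseness I must show that $\kappa_b$ is closed. In the compact case this is automatic, but in the non-compact setting it genuinely fails for general tense metrics, as the example of Cairns and Escobales shows; the point is that the particular representative $\kappa_b$ produced by averaging is the \'Alvarez class representative and is closed because it is invariant under the abelian family of transverse Killing fields coming from the commuting sheaf, which forces $d\kappa_b=0$. I expect the main obstacle to be precisely the non-compactness of the leaf closures: both the construction of the averaging operator $P$ and the solvability of the equation for $f$ must be carried out without compactness, and it is here that transverse completeness does the essential work; establishing closedness of $\kappa_b$ — a phenomenon with no counterpart in Dom\'inguez's compact theorem — is the second delicate point.
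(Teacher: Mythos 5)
Your reduction to a leafwise conformal change is where the proof breaks down, and the gap is not technical but structural. Fixing the transverse metric \emph{and} the orthogonal complement $H=(T\F)^{\perp g}=\ker\alpha$, and replacing $\alpha$ by $e^{-f}\alpha$, your formula $\bar\kappa=\kappa+df-(Vf)\,\alpha$ is correct; but since $\iota_V\kappa=\iota_V\kappa_b=0$ and $\iota_V\bigl(df-(Vf)\alpha\bigr)=0$, the equation $\bar\kappa=\kappa_b$ is vacuous along the leaves and is equivalent to $Xf=(\kappa_b-\kappa)(X)$ for every $X\in H$: it prescribes all the \emph{transverse} derivatives of $f$. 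This is not a transport equation along the dense orbits, and ``zero mean along the leaf closures'' is not its integrability condition; it is a cohomological equation over the horizontal distribution, and it is genuinely obstructed. Concretely, take the Kronecker flow on $\TT^{2}=\RR^{2}/\ZZ^{2}$ tangent to $V=\partial_x+\theta\partial_y$ with $\theta$ irrational, let $\alpha_0,\omega$ be the constant $1$-forms dual to the frame $\{V,W\}$ with $W=-\theta\partial_x+\partial_y$, and consider the bundle-like metric $g=\alpha^{2}+\omega^{2}$ with $\alpha=\alpha_0+b\,\omega$, where $b=\tfrac1\theta+\tfrac{1}{2\pi}\sin(2\pi y)$ (its transverse part is the flat, holonomy-invariant one). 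Then $\kappa=(Vb)\,\omega=\theta\cos(2\pi y)\,\omega$, the basic $1$-forms are the constant multiples $c\,\omega$ because the leaves are dense, and $H$ is spanned by $X=W-bV$. The circles $\{y=0\}$ and $\{y=1/2\}$ are periodic orbits of $X$ on which $Vb\equiv\theta$ and $Vb\equiv-\theta$ respectively, so integrating $Xf=c-Vb$ over them forces $c=\theta$ and $c=-\theta$ simultaneously. Hence for this bundle-like metric \emph{no} leafwise rescaling makes $\bar\kappa$ basic (in particular not equal to your $\kappa_b$, which is $0$ here), even though the flow is taut for the flat metric. Any correct proof must also move the orthogonal complement; this is exactly what the paper's construction does, since the averaged characteristic forms ($\chi_1$ in Lemma~\ref{lemma:chi1}, $\chi_3$ in the proof of Theorem~\ref{thm:tense2}) have a kernel different from that of the original form.

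There are two further gaps. First, your averaging operator $P$ has no definition when leaf closures are non-compact: a closed non-proper\ldots rather, a closed non-compact leaf (the $\RR$-bundle case) carries no canonical invariant probability measure, and Molino--Sergiescu structure theory is not available off the shelf for transversally complete foliations on non-compact manifolds. The claim that transverse completeness makes $P$ ``well defined and smoothing even when the closures are non-compact'' is asserted, not proved; the paper needs the dichotomy (Theorem~\ref{Theorem: dichotomy}: either an $\RR$-bundle, which is taut, or all closures compact) precisely to dispose of this case and to legitimize Molino theory (Theorem~\ref{Theorem: Molino}), and that dichotomy is a substantive step absent from your proposal. Second, even with compact closures, you average over leaf closures on $M$ itself, where their dimension jumps across singular strata; smoothness of $P\kappa$ across those strata is exactly the hard analytic point for which Dom\'inguez needed Sarkaria's smoothing operator. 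The paper avoids it entirely: it lifts to the orthonormal frame bundle, where the closures are the fibers of a locally trivial $(\TT^k,\F_v)$-bundle, reduces the structure group to $\GL_{v}(k;\ZZ)\ltimes\TT^{k}$ (Proposition~\ref{Proposition : Retraction 1}), and averages over honest compact group actions on the covering associated to $\ker\phi_M$, where the torus bundle becomes principal. The monodromy $h_\sigma\colon\pi_1M\to\GL_{v}(k;\ZZ)$ handled there is precisely the obstruction to your local transverse torus actions gluing into a global one (it is nontrivial exactly in the non-taut examples, such as Carri\`ere's), and your proposal never confronts it; the same goes for your closedness claim, which in the paper follows from the explicit local formula $\kappa=-d\log|\chi(X)|$ of Lemma~\ref{lem:char} rather than from invariance under Killing fields.
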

We refer to Definition~\ref{definition:TC} for the definition of transversally completeness of Riemannian foliations. Note that some authors use this terminology with a different meaning.

\begin{remark}
For Riemannian foliations of any dimension which can be suitably embedded into a singular Riemannian foliation on a compact manifold, strongly tenseness was proved in~\cite{royo-saralegi-wolak1,royo-saralegi-wolak2} by the application of Dom\'inguez's theorem. For any Riemannian foliation such that the space of leaf closures is compact, the strongly tenseness was proved in~\cite[Theorem~1.9]{Nozawa 2}.
\end{remark}

Based on Theorem~\ref{Theorem : Strong tenseness} we ask the following question.

\begin{question}
Is any complete Riemannian foliation strongly tense?
\end{question}

Here a Riemannian foliation is called complete if the holonomy pseudogroup is complete as a pseudogroup. Due to Salem~\cite[Appendix by Salem]{Molino}, it is the largest known class of Riemannian foliations for which Molino's structure theorems hold.

The first essential point in the proof of Theorem~\ref{Theorem : Strong tenseness} is the following dichotomy, which is specific for dimension one (see Remark~\ref{rem:DicMol} for a preceding result of Molino).
\begin{theorem}\label{Theorem: dichotomy}
Let $M$ be a connected manifold with a transversally complete Riemannian foliation $\mathcal{F}$ of dimension one. Then, one of the following holds:
\begin{enumerate}
\item $(M,\mathcal{F})$ is an $\mathbb{R}$-bundle or
\item the closure of every leaf of $\mathcal{F}$ is compact.
\end{enumerate}
\end{theorem}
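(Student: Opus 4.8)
The plan is to read off the dichotomy from Molino's structure theory for transversally complete Riemannian foliations, combined with the fact, due to Carri\`ere and Molino--Sergiescu, that the structural Lie algebra of a Riemannian flow is abelian. First I would pass to the space of leaf closures. By Molino's theory for transversally complete foliations, the closures $\overline{L}$ of the leaves are the fibres of a locally trivial fibration $\bar\pi\colon M\to W$ onto the Hausdorff smooth manifold $W=M/\overline{\F}$. Since $M$ is connected, so is $W$, and all fibres of $\bar\pi$ are mutually diffeomorphic. Hence either every leaf closure is compact, which is conclusion (ii), or every leaf closure is non-compact, and it remains to show that in the latter case $(M,\F)$ is an $\RR$-bundle whose fibres are the leaves.

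To analyse a single fibre, I would use that $\F$ restricts on each leaf closure $\overline{L}$ to a transversally complete Lie $\mathfrak g$-foliation with dense leaf, where the structural algebra $\mathfrak g$ is abelian, say $\mathfrak g\cong\RR^{k}$ with $k=\dim\mathfrak g$, so that $\dim\overline{L}=k+1$. Molino's homogeneity of leaf closures, via Fedida's theorem, then presents $\overline{L}$ as a homogeneous space of the simply connected abelian group $\RR^{k+1}$ generated by the flow direction together with the $k$ commuting transverse Killing fields of the commuting sheaf; here transverse completeness is exactly what guarantees that these fields are complete. Thus $\overline{L}\cong\TT^{a}\times\RR^{\,k+1-a}$ for some $a$. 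The density of the one-dimensional leaf $L$ in $\overline{L}$ forces the non-compact factor to have dimension at most one: a one-parameter orbit projects to a line in the $\RR^{\,k+1-a}$ factor, whose closure is at most a line, so it can be dense only if $k+1-a\le 1$.

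The two surviving cases give exactly the dichotomy. If $k+1-a=0$ then $\overline{L}\cong\TT^{k+1}$ is compact; the same computation also shows that whenever $k\ge 1$ a dense one-parameter orbit in $\TT^{k}\times\RR$ is impossible, since the flow parameter is pinned by the $\RR$-coordinate and cannot wind densely around $\TT^{k}$. Hence, in the non-compact branch, we are forced to have $a=k$ and $k=0$, that is $\overline{L}\cong\RR$. But $k=0$ means the leaves are closed, so $\overline{L}=L$, and $\bar\pi\colon M\to W$ is a locally trivial fibration with fibre $\RR$ whose fibres are the leaves of $\F$; this is conclusion (i).

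I expect the main obstacle to be the second paragraph: carefully setting up the homogeneous model $\overline{L}\cong\TT^{a}\times\RR^{\,k+1-a}$ in the non-compact setting, which requires invoking the transversally complete versions of the Molino and Fedida structure results and, in particular, using transverse completeness to pass from the locally defined transverse Killing fields of the commuting sheaf to globally defined, complete, commuting fields on each leaf closure. Once this model is in place, the recurrence argument that pins down the torus dimension is elementary.
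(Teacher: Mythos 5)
Your proposal is circular, and the circularity is precisely at the point the theorem exists to handle. You derive the dichotomy from Molino's structure theory: the fibration $\bar\pi\colon M\to W$ by leaf closures, the Lie $\mathfrak{g}$-foliation with dense leaves on each closure, Fedida's theorem and the homogeneous model $\TT^{a}\times\RR^{\,k+1-a}$. But for a foliation that is transversally complete only in the sense of Definition~\ref{definition:TC} (a bundle-like metric whose orthogonal geodesics are complete), none of this machinery is available a priori when $M$ is non-compact: Molino's theory in the non-compact setting requires completeness of the transverse parallelism of the lifted foliation on the orthonormal frame bundle (Molino's ``complete'' case, extended by Salem), and the paper explicitly warns that a transversally complete Riemannian foliation need not be complete in that sense. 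The logical order in the paper is the reverse of yours: Theorem~\ref{Theorem: dichotomy} is proved first, by a direct geometric argument, precisely so that Molino's structure theorem can then be justified in the surviving case where every leaf closure is compact (Theorem~\ref{Theorem: Molino}), which is what the rest of the paper runs on. Invoking ``transversally complete versions of the Molino and Fedida structure results'', as your second paragraph does, is assuming what is to be proved; you flag this as ``the main obstacle'', but it is not a technical obstacle --- it is the content of the theorem.

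There is a second, independent error in your first paragraph: even when Molino's theory does apply (say $M$ compact), the closures of the leaves of $\F$ do \emph{not} form a locally trivial fibration of $M$ over a smooth Hausdorff manifold with mutually diffeomorphic fibers. They form a \emph{singular} Riemannian foliation whose leaves may have different dimensions: for the isometric flow on $S^{3}\subset\CC^{2}$ given by $(z_{1},z_{2})\mapsto(e^{i\alpha t}z_{1},e^{i\beta t}z_{2})$ with $\alpha/\beta$ irrational, the generic leaf closure is a $2$-torus, while the two circles $\{z_{1}=0\}$ and $\{z_{2}=0\}$ are closed leaves. The fibration-by-closures statement holds for the lifted, transversally parallelizable foliation on the orthonormal frame bundle, not for $\F$ on $M$ itself. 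So your deduction ``$W$ connected, fibers diffeomorphic, hence all closures compact or all non-compact'' does not stand as written; in the paper the corresponding step (one compact leaf closure forces all leaf closures compact) is proved instead by transporting geodesics orthogonal to the leaves, using transverse completeness. For comparison, the paper's proof is elementary and self-contained: a non-proper leaf has compact closure, via a covering/Cauchy-sequence argument with the foliated normal exponential map (Lemma~\ref{lemma:exponentialmap}); compactness of one closure then propagates to all closures by the geodesic transport just mentioned; and if some leaf is proper and non-compact, then all leaves are, and a stability theorem of \u{Z}ukova~\cite[Theorem~1]{Zukova} provides each such leaf with a saturated tubular neighborhood that is an $\RR$-bundle, whence $(M,\F)$ itself is an $\RR$-bundle. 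Your group-theoretic endgame (a one-parameter subgroup cannot be dense in $\TT^{a}\times\RR$ unless $a=0$) is fine, but it only becomes usable after the structure theory has been legitimately established, which is exactly what cannot be done before the dichotomy.
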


Since Theorem~\ref{Theorem : Strong tenseness} is clearly true for $\mathbb{R}$-bundles, it is essential to prove Theorem~\ref{Theorem : Strong tenseness} in the case where the closure of every leaf is compact. In turn, Molino's structure theorem remains true in this case even if $M$ is non-compact (Theorem~\ref{Theorem: Molino}). Thus we can apply some results of Molino and Sergiescu involving reductions of the structure group of torus bundles (Section~\ref{section:reduction}) to show Theorem~\ref{Theorem : Strong tenseness}. Even in the case where $M$ is compact, our proof is new and simpler than the original proof of Theorem~\ref{Theorem : Tenseness} due to Dom\'inguez, as we make no use of Sarkaria's smoothing operator~\cite{Sarkaria}.

\subsection{The \'{A}lvarez class}
For a Riemannian foliation $\F$ on a closed manifold $M$ with a bundle-like metric $g$, the orthogonal projection $\kappa_b$ of the mean curvature form $\kappa$ to the space of basic $1$-forms with respect to the natural inner product is closed~\cite[Corollary~3.5]{Alvarez Lopez}. The cohomology class $[\kappa_b]\in H^1(M/\F)$ is independent of $g$ (\cite[Theorem 5.2]{Alvarez Lopez}) and called the {\em \'{A}lvarez class} of $(M,\F)$. The triviality of the \'{A}lvarez class of $(M,\F)$ characterizes tautness~\cite[Theorem~6.4]{Alvarez Lopez}. In turn, the example~\cite[Example~2.4]{Cairns Escobales} of a Riemannian foliation on a non-compact manifold with basic but non-closed $\kappa$ shows that the \'{A}lvarez class is not defined in general for Riemannian foliations on non-compact manifolds. Nevertheless, Theorem~\ref{Theorem: dichotomy} implies the following result for Riemannian foliations of dimension one (see Section~\ref{sec:mol} for the proof).
\begin{theorem}\label{thm:alv}
Let $(M,\F)$ be a connected manifold with a transversally complete Riemannian foliation of dimension one with a strongly tense metric $g$. If $(M,\F)$ is not an $\RR$-bundle, then the cohomology class of the mean curvature form $\kappa$ is given by the logarithm of the holonomy homomorphism $\pi_{1}M \to \RR$ of the determinant line bundle of the Molino's commuting sheaf of $(M,\F)$. In particular, $[\kappa]$ is independent of $g$.
\end{theorem}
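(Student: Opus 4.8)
The plan is first to reduce to the situation covered by Molino's structure theorem, and then to read off the period of $\kappa$ from the flat geometry of the commuting sheaf.

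Since $(M,\F)$ is not an $\RR$-bundle, Theorem~\ref{Theorem: dichotomy} tells us that every leaf closure is compact, so Theorem~\ref{Theorem: Molino} applies. The leaf closures are the fibres of the projection $\pi\colon M\to W=M/\overline{\F}$ and are tori; consequently the structural algebra is abelian and the commuting sheaf $\mathcal{C}$ is a locally constant sheaf of abelian Lie algebras whose stalks span the directions of $\overline{\F}$ transverse to $\F$. Taking top exterior powers, $\det\mathcal{C}=\Lambda^{\mathrm{top}}\mathcal{C}$ is a flat real line bundle on $M$, with holonomy homomorphism $\rho\colon\pi_{1}M\to\RR^{\times}$; its logarithm $\log\lvert\rho\rvert\in\Hom(\pi_{1}M,\RR)=H^{1}(M;\RR)$ is, by construction, independent of $g$. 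Because $g$ is strongly tense, $\kappa$ is basic and closed, so $[\kappa]\in H^{1}(M;\RR)$ is the homomorphism $\gamma\mapsto\int_{\gamma}\kappa$, and the whole statement reduces to the identity $[\kappa]=\log\lvert\rho\rvert$.

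The conceptual heart of that identity is that $\kappa$ annihilates $\overline{\F}$. Let $X$ be a local unit generator of $\F$ and let $V$ be a section of $\mathcal{C}$; in the adapted bundle-like metric $V$ may be taken to be a Killing field commuting with $X$, and then the metric computation $\kappa(V)=g(\nabla_{X}X,V)=-g(X,\nabla_{X}V)=-g(X,\nabla_{V}X)=-\tfrac12\,V\bigl(g(X,X)\bigr)=0$ shows that $\kappa$ vanishes on $T\overline{\F}=\langle X\rangle\oplus\mathcal{C}$. Hence $\kappa$ is $\overline{\F}$-basic and $[\kappa]$ is pulled back from $W$, so its value on a loop is detected purely by the transverse monodromy. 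I would then pass to a distinguished chart furnished by the Molino--Sergiescu reductions of Section~\ref{section:reduction}, where $\F$ is a linear flow on a trivial torus fibre and a flat frame $V_{1},\dots,V_{r}$ of $\mathcal{C}$ is available, and show that $\kappa=d\log\lVert V_{1}\wedge\cdots\wedge V_{r}\rVert$ on such a chart, with $\lVert\cdot\rVert$ the covolume measured by $g$. Transporting the flat section $V_{1}\wedge\cdots\wedge V_{r}$ of $\det\mathcal{C}$ once around a loop $\gamma$ multiplies it by $\rho(\gamma)$, so $\int_{\gamma}\kappa=\int_{\gamma}d\log\lVert V_{1}\wedge\cdots\wedge V_{r}\rVert=\log\lvert\rho(\gamma)\rvert$, which is the desired identity.

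The step I expect to be the main obstacle is precisely this last local computation: verifying that the flow mean curvature $(\nabla_{X}X)^{\flat}$ coincides with $d\log\lVert V_{1}\wedge\cdots\wedge V_{r}\rVert$ rather than with some other logarithmic derivative (the two differ by the variation of the flow-direction length, and only the invariance of the transverse metric along $\F$ forces the correct one to survive), together with justifying that the commuting sheaf frame is genuinely Killing for the chosen metric so that $\kappa(V)=0$. Once the identity $\int_{\gamma}\kappa=\log\lvert\rho(\gamma)\rvert$ is established for an \emph{arbitrary} strongly tense $g$, the independence of $[\kappa]$ from $g$ is automatic, since the right-hand side is the holonomy of $\det\mathcal{C}$ and involves no metric; this is the mechanism behind the concluding assertion of the theorem. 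As a sanity check, the hyperbolic mapping torus of $T^{2}$ (Carri\`ere's flow) realises $\rho(\gamma)=\lambda$ for the expanding eigenvalue $\lambda$ and indeed yields $\kappa=(\log\lambda)\,dt$, confirming both the identity and the sign.
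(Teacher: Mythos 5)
Your global strategy (dichotomy, then Molino theory, then a period computation identifying $[\kappa]$ with the log-holonomy of $\det\mathcal{C}$) is the same as the paper's, but the two steps you yourself flag as the ``main obstacle'' are genuine gaps, and as proposed they fail. The first problem is structural: you apply the fibration picture on $M$ itself, asserting that the leaf closures are the fibres of $\pi\colon M\to W=M/\overline{\F}$ and are tori. That is false in general: for a one-dimensional Riemannian foliation the leaf closures can have varying dimension (e.g.\ a generic isometric flow on $\mathbb{S}^{3}$ has two circle leaves and dense-in-torus leaves elsewhere), and $M/\overline{\F}$ need not be a manifold. Theorem~\ref{Theorem: Molino} produces a $(\mathbb{T}^k,\F_v)$-bundle on the orthonormal frame bundle $M^{1}$, not on $M$; this is exactly why the paper first transfers the strongly tense data to $(M^{1},\F^{1})$ with $\kappa^{1}=p^{*}\kappa$ and then argues on linearly foliated torus bundles. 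Your flat-frame covolume argument breaks precisely at the singular leaf closures of $M$: there the evaluation of the commuting sheaf into $\nu\F$ drops rank, so $\lVert V_{1}\wedge\cdots\wedge V_{r}\rVert$ vanishes and $d\log\lVert\cdot\rVert$ is undefined.

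The second problem is that both pointwise identities you rely on are unjustified, and indeed false for a general strongly tense $g$. The theorem concerns an \emph{arbitrary} strongly tense metric, which need not be bundle-like; sections of Molino's commuting sheaf are transverse Killing fields for a transverse metric, not Killing fields of $g$, so the chain $\kappa(V)=-g(X,\nabla_{X}V)=\cdots=0$ has no basis (it also silently uses $Xg(X,V)=0$, which does not follow from Killing plus $[X,V]=0$). The fact you want --- that $\kappa$ annihilates $T\overline{\F}$ --- is true, but by a different argument: $\kappa$ is closed and basic, so locally $\kappa=dh$ with $h$ basic, and basic functions are constant on leaf closures. Likewise $\kappa=d\log\lVert V_{1}\wedge\cdots\wedge V_{r}\rVert$ fails pointwise: rescale the transverse part of Carri\`ere's metric by a basic conformal factor $e^{2u}$; then $\kappa$ is unchanged (it depends only on the metric along the leaves and on the orthogonal complement), while your right-hand side changes by $du$. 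Only the equality of \emph{periods} survives, and proving it requires the inputs the paper actually uses: Lemma~\ref{lem:char}, which for a strongly tense characteristic form gives $\kappa=-d\log|\chi(X)|$ with $X$ the lattice generator of the flow, whence $\int_{\gamma}\kappa=\log\lambda$ on the mapping torus of $A\in\GL_{v}(k;\ZZ)$; and Proposition~\ref{prop:hol} together with the unimodularity $|\det A|=1$, which is what converts $\log\lambda$ into the log-holonomy of $\det\mathcal{C}$ (it is exactly this unimodularity that makes your ``transverse covolume'' period agree with the ``longitudinal length'' period). So the skeleton is right, but the proposal does not close; patching it essentially forces you to reproduce the paper's route through Lemma~\ref{lem:char} and Proposition~\ref{prop:hol}.
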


We will use the following terminology below, which is well-defined by Theorem~\ref{thm:alv}.
\begin{definition}
For a connected manifold $M$ with a transversally complete Riemannian foliation $\F$ of dimension one which is not an $\RR$-bundle, the cohomology class $[\kappa]$ of the mean curvature form of any strongly tense metric is called the {\em \'{A}lvarez class} of $(M,\F)$.
\end{definition}

\begin{remark}
It is easy to see that an $\RR$-bundle is always taut. But there exists an $\RR$-bundle with a strongly tense metric such that the cohomology class of the mean curvature form is nontrivial (see~\cite[Proposition~9.3]{Nozawa 2}). Below the \'{A}lvarez class of an $\RR$-bundle is defined as the trivial class for a conventional reason.
\end{remark}

\begin{remark}
For Riemannian foliations of any dimension which can be suitably embedded into a singular Riemannian foliation on a compact manifold, the \'Alvarez class is well-defined as shown in~\cite{royo-saralegi-wolak1,royo-saralegi-wolak2} by the application of Dom\'inguez's theorem.
\end{remark}

\subsection{Applications}

\subsubsection{Characterization of tautness}

The basic cohomology is the de Rham cohomology of the leaf space in a sense (see, for example,~\cite[Appendix B]{Molino}), and its relation to tautness of Riemannian foliations was studied by many authors mentioned in the introduction.

First, we state the twisted Poincar\'{e} duality of the basic cohomology, which is a consequence of a theorem of Sergiescu and the dichotomy theorem (Theorem~\ref{Theorem: dichotomy}). In~\cite[Theorem~3.1]{Kamber Tondeur 3}, Kamber-Tondeur proved that any orientable and transversally oriented tense Riemannian foliation $\F$ of codimension $q$ on a compact manifold $M$ satisfies
\begin{equation}\label{eq:tpdKT}
H^{\bullet}_{c}(M/\F)\cong H_{\kappa}^{q-\bullet}(M/\F)^{*}\;,
\end{equation}
where the $\kappa$-twisted basic cohomology $H^{\bullet}_{\kappa}(M/\F)$ stands for the cohomology of the basic de Rham complex with the twisted differential $d_{\kappa}\omega=d\omega - \kappa\wedge\omega$~\cite[p.~121]{Kamber Tondeur 2}. In~\cite[Section~1]{Sergiescu}, Sergiescu defined the orientation sheaf $\mathcal{P}$ of $(M,\F)$ and proved the Poincar\'{e} duality~\cite[Th\'{e}or\`{e}me~I]{Sergiescu} on basic cohomology of $(M,\F)$. His argument shows the isomorphism
\begin{equation}\label{eq:tpdS}
H^{\bullet}_{c}(M/\F)\cong H^{q-\bullet}(M/\F;\mathcal{P}^{*})^{*}
\end{equation}
for any complete Riemannian foliation on a possibly non-compact manifold whose closures of leaves are compact (see~\cite[Proposition~3.2.9.1]{Haefliger 2}). Here a Riemannian foliation is called complete if the canonical transverse parallelism of its lift to the orthonormal frame bundle consists of complete vector fields~\cite[Remark on p.~88]{Molino}. In the case where $M$ is compact, $H_{\kappa}^{\bullet}(M/\F)\cong H^{\bullet}(M/\F;\mathcal{P}^{*})$ by~\cite[Theorem~5.9 (iii)]{Dominguez}. So~\eqref{eq:tpdS} coincides with~\eqref{eq:tpdKT} if $M$ is compact. The twisted duality~\eqref{eq:tpdKT} for Riemannian foliations of any dimension which can be suitably embedded into a singular Riemannian foliation on a compact manifold was proved in~\cite{royo-saralegi-wolak2}. Note that, it is not clear if~\eqref{eq:tpdKT} always follows from~\eqref{eq:tpdS} in the case where $M$ is non-compact.

A priori, a transversally complete Riemannian foliation may not be complete in the sense of~\cite[Remark on page 88]{Molino}, but we get the following result from Theorems~\ref{Theorem: dichotomy},~\ref{thm:alv} and~\cite[Th\'{e}or\`{e}me~I]{Sergiescu}.

\begin{corollary}\label{corollary:duality}
For a transversally oriented and transversally complete Riemannian foliation $\F$ of dimension one and codimension $q$ on a possibly non-compact manifold $M$, we have the isomorphisms~\eqref{eq:tpdS} and
\begin{equation}\label{eq:tpdKT2}
H^{\bullet}_{c}(M/\F)\cong H_{\kappa}^{q-\bullet}(M/\F)^{*}\;,
\end{equation}
where $\kappa$ is a representative of the \'{A}lvarez class of $(M,\F)$.
\end{corollary}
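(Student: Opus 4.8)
The plan is to split along the dichotomy of Theorem~\ref{Theorem: dichotomy} and handle the two alternatives separately, and in the nontrivial case to deduce \eqref{eq:tpdKT2} from \eqref{eq:tpdS} by identifying the $\kappa$-twisted basic complex with the basic complex with coefficients in Sergiescu's orientation sheaf. Throughout, the isomorphism \eqref{eq:tpdS} will come from \cite[Th\'{e}or\`{e}me~I]{Sergiescu}, while the passage to \eqref{eq:tpdKT2} rests on Theorem~\ref{thm:alv}, which pins down the Álvarez representative $\kappa$ explicitly.

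Suppose first that $(M,\F)$ is an $\RR$-bundle over a base $B$. Then the leaf space is the $q$-dimensional manifold $B$, the basic complex is the de Rham complex of $B$, and compactly supported basic cohomology is $H^{\bullet}_{c}(B)$; transverse orientability makes $B$ oriented, so the orientation sheaf $\mathcal{P}$ is trivial. Since an $\RR$-bundle is taut, I may choose a strongly tense metric with $\kappa=0$, which represents the (conventionally trivial) Álvarez class, whence $d_{\kappa}=d$ and $H^{\bullet}_{\kappa}(M/\F)=H^{\bullet}(M/\F)$. Both \eqref{eq:tpdS} and \eqref{eq:tpdKT2} then collapse to the ordinary Poincar\'{e} duality $H^{\bullet}_{c}(B)\cong H^{q-\bullet}(B)^{*}$ on the oriented $q$-manifold $B$.

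Now suppose the closure of every leaf is compact. I would first check that transversal completeness together with compactness of leaf closures yields completeness in the sense required by Sergiescu, using the fact that Molino's structure theorem still holds here (Theorem~\ref{Theorem: Molino}); granting this, \cite[Th\'{e}or\`{e}me~I]{Sergiescu} gives \eqref{eq:tpdS} directly. To obtain \eqref{eq:tpdKT2} it then suffices to establish the isomorphism $H^{\bullet}_{\kappa}(M/\F)\cong H^{\bullet}(M/\F;\mathcal{P}^{*})$, since combining it with \eqref{eq:tpdS} produces \eqref{eq:tpdKT2}. A closed basic $1$-form $\kappa$ determines a flat real line bundle over the leaf space with holonomy $\gamma\mapsto\exp\int_{\gamma}\kappa$, and the twisted differential $d_{\kappa}=d-\kappa\wedge(\cdot)$ computes exactly the basic cohomology with coefficients in this bundle. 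By Theorem~\ref{thm:alv}, when $\kappa$ represents the Álvarez class its holonomy is the holonomy of the determinant line bundle of Molino's commuting sheaf, which is precisely the monodromy data defining $\mathcal{P}$; hence the flat bundle attached to $\kappa$ is $\mathcal{P}^{*}$ and the desired identification follows.

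The main obstacle I expect is this last identification in the \emph{non-compact} setting. In the compact case it is \cite[Theorem~5.9~(iii)]{Dominguez}, but here I cannot invoke that statement and must re-derive it from the explicit description in Theorem~\ref{thm:alv}, carefully matching the holonomy of the determinant line bundle of the commuting sheaf with the monodromy of Sergiescu's orientation sheaf $\mathcal{P}$ (including sign and duality conventions, so that one lands on $\mathcal{P}^{*}$ rather than $\mathcal{P}$). A secondary technical point, flagged already in the discussion preceding the corollary, is confirming that a transversally complete flow with compact leaf closures meets the completeness hypothesis underlying \cite[Th\'{e}or\`{e}me~I]{Sergiescu}; this is where the dichotomy is essential, as it rules out the genuinely non-compact leaf-closure behavior.
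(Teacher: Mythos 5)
Your proposal has the same skeleton as the paper's proof: split along Theorem~\ref{Theorem: dichotomy}, settle the $\RR$-bundle case by ordinary Poincar\'{e} duality of the base, and in the compact-leaf-closure case get~\eqref{eq:tpdS} from Sergiescu and then deduce~\eqref{eq:tpdKT2} by identifying $H^{\bullet}_{\kappa}(M/\F)$ with $H^{\bullet}(M/\F;\mathcal{P}^{*})$ via Theorem~\ref{thm:alv} and the fact that the holonomies of $\mathcal{P}$ and of the determinant line bundle of Molino's commuting sheaf agree up to sign. Your treatment of the twisted complex (the flat line bundle with holonomy $\gamma\mapsto\exp\int_{\gamma}\kappa$, matched against the holonomy supplied by Theorem~\ref{thm:alv}) is exactly the paper's argument, spelled out in slightly more detail.

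The one place where you genuinely diverge, and where your plan as written could stall, is the justification of~\eqref{eq:tpdS}. You propose to \emph{verify the hypothesis} of~\cite[Th\'{e}or\`{e}me~I]{Sergiescu}, i.e., to show that transversal completeness together with compact leaf closures implies completeness in the sense of~\cite[Remark on p.~88]{Molino}, and then cite the theorem as a black box. The paper deliberately avoids this step: it states explicitly that ``a priori, a transversally complete Riemannian foliation may not be complete'' in that sense, and this is precisely why the corollary is not an immediate citation. Instead, the paper observes that Sergiescu's \emph{proof} uses only the Molino structure theory of the foliation (the fibration of $M^{1}$ by leaf closures, the commuting sheaf, etc.), and that this structure theory is available here by Theorem~\ref{Theorem: Molino} precisely because the leaf closures are compact; so one re-runs the proof rather than invokes the statement. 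You flag the completeness issue yourself as a ``secondary technical point,'' but it is really the crux: either you supply a proof of the implication (which is not in the paper and would need an argument relating completeness of orthogonal geodesics to completeness of the canonical transverse parallelism on the orthonormal frame bundle), or you reroute as the paper does and apply Sergiescu's argument under Theorem~\ref{Theorem: Molino}. With that rerouting, the rest of your proof coincides with the paper's.
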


\begin{proof}
If $(M,\F)$ is an $\RR$-bundle, then $\kappa$ is trivial and~\eqref{eq:tpdKT2} follows from the Poincar\'{e} duality of the leaf space $M/\F$. So, by Theorem~\ref{Theorem: dichotomy}, we can assume that the closure of each leaf of $(M,\F)$ is compact. In this case, Molino's structure theorems remain valid (see Theorem~\ref{Theorem: Molino}).
Thus the proof of~\cite[Th\'{e}or\`{e}me~I]{Sergiescu} for the compact case can be applied to show~\eqref{eq:tpdS}. Note that the holonomy homomorphisms of Sergiescu's orientation sheaf $\mathcal{P}$ and the determinant line bundle of Molino's commuting sheaf are equal up to sign by definition of $\mathcal{P}$. Thus the latter part of Theorem~\ref{thm:alv} implies $H_{\kappa}^{\bullet}(M/\F)\cong H^{\bullet}(M/\F;\mathcal{P}^{*})$. Hence we get~\eqref{eq:tpdKT2}.
\end{proof}

Theorem~\ref{Theorem : Strong tenseness} and Corollary~\ref{corollary:duality} give us the following characterization of tautness in terms of basic cohomology (see Section~\ref{sec:mol} for the proof).

\begin{corollary}\label{corollary:tautness characterization}
Let $\F$ be a transversally oriented and transversally complete Riemannian foliation of dimension one and codimension $q$ on a possibly non-compact manifold $M$. Let $\kappa$ be a representative of the \'{A}lvarez class of $(M,\F)$. Then, the following are equivalent:
\begin{enumerate}
\item $\F$ is taut;
\item $H^{q}_c(M/\F) \cong \mathbb{R}$;
\item $H^0_{\kappa}(M/\F) \cong \mathbb{R}$;
\item the image of the holonomy homomorphism $\pi_{1}M \to \Aut(\RR) \cong \RR^{\times}$ of Sergiescu's orientation sheaf of $(M,\F)$ is contained in $\{\pm 1\}$.
\item the image of the holonomy homomorphism $\pi_{1}M \to \Aut(\RR) \cong \RR^{\times}$ of Molino's commuting sheaf of $(M,\F)$ is contained in $\{\pm 1\}$.
\end{enumerate}
Otherwise, $H^{q}_c(M/\F)=0$.
\end{corollary}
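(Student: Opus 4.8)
The plan is to route all five conditions through the single pivot that the \'Alvarez class $[\kappa]$ is trivial, splitting the argument according to the dichotomy of Theorem~\ref{Theorem: dichotomy}. If $(M,\F)$ is an $\RR$-bundle, then its leaves are closed, so the leaf closures coincide with the leaves, $\overline{\F}=\F$, and Molino's commuting sheaf is trivial; hence the holonomy homomorphisms in (iv) and (v) are trivial and lie in $\{\pm1\}$, condition (i) holds because an $\RR$-bundle is always taut, and choosing the exact representative $\kappa=0$ of the (by convention trivial) \'Alvarez class gives $H^0_\kappa(M/\F)=H^0(M/\F)\cong\RR$ together with $H^q_c(M/\F)=H^q_c(B)\cong\RR$ for the connected oriented base $B=M/\F$. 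Thus all five conditions hold simultaneously and the ``otherwise'' case is vacuous. From here on I may assume, by Theorem~\ref{Theorem: dichotomy}, that every leaf closure is compact, so that Molino's structure theorem (Theorem~\ref{Theorem: Molino}) and the twisted duality of Corollary~\ref{corollary:duality} are available.

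The equivalences (i)$\Leftrightarrow$(iv)$\Leftrightarrow$(v) are the most direct. By Theorem~\ref{thm:alv}, the class $[\kappa]\in H^1(M/\F)$ corresponds to the homomorphism $\gamma\mapsto\log\lvert\rho(\gamma)\rvert$, where $\rho\colon\pi_1 M\to\RR^{\times}$ is the holonomy of the determinant line bundle of Molino's commuting sheaf; hence $[\kappa]=0$ if and only if $\lvert\rho\rvert\equiv 1$, i.e.\ if and only if the image of $\rho$ lies in $\{\pm1\}$, which is (v). As recorded in the proof of Corollary~\ref{corollary:duality}, the holonomy of Sergiescu's orientation sheaf $\mathcal P$ and $\rho$ agree up to a $\{\pm1\}$-valued character, so their absolute values coincide and (iv)$\Leftrightarrow$(v). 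Finally, tautness is equivalent to $[\kappa]=0$: a taut metric has vanishing $\kappa$, which is basic and closed, hence strongly tense, so $[\kappa]=0$ by the metric-independence in Theorem~\ref{thm:alv}; conversely, starting from a strongly tense metric with $[\kappa]=0$, I write $\kappa=df$ for a basic function $f$ and perform a leafwise conformal rescaling of the metric by a basic factor, which alters the mean curvature form by an exact basic form and can thus be chosen to cancel $\kappa$, producing a metric with vanishing mean curvature. This gives (i)$\Leftrightarrow[\kappa]=0$.

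It remains to bring in (ii) and (iii). I claim $H^0_\kappa(M/\F)\cong\RR$ exactly when $[\kappa]=0$, and vanishes otherwise. Indeed, an element of $H^0_\kappa(M/\F)$ is a basic function $f$ with $df=f\kappa$; since $\kappa$ is closed, the Poincar\'e lemma for basic forms on the connected, orbifold-like leaf space provided by the compact-leaf-closure case shows that $e^{-\phi}f$ is locally constant whenever $d\phi=\kappa$ locally, so a nonzero such $f$ is nowhere zero and $\kappa=d\log\lvert f\rvert$ is basic-exact. Conversely, if $\kappa=dg$ then $f=e^{g}$ spans a one-dimensional space of solutions. Hence $H^0_\kappa(M/\F)\cong\RR\Leftrightarrow[\kappa]=0$, giving (iii)$\Leftrightarrow[\kappa]=0$. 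The twisted duality $H^q_c(M/\F)\cong H^0_\kappa(M/\F)^{*}$ of Corollary~\ref{corollary:duality} then yields (ii)$\Leftrightarrow$(iii), and shows that $H^q_c(M/\F)$ is one-dimensional when $[\kappa]=0$ and vanishes otherwise, which is precisely the final assertion.

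The routine steps above conceal the one genuine difficulty, namely verifying that the tautness criterion and the computation of $H^0_\kappa(M/\F)$ survive the loss of compactness. Both hinge on the leaf space behaving like a connected oriented orbifold with a valid Poincar\'e lemma for basic forms, which is exactly what the compact-leaf-closure branch of Theorem~\ref{Theorem: dichotomy} together with Molino's structure theorem guarantees, while the $\RR$-bundle branch is disposed of by hand. I therefore expect the main obstacle to be the careful bookkeeping that the nowhere-vanishing argument for $d_\kappa$-parallel functions and the leafwise conformal rescaling remain legitimate globally on the non-compact $M$, rather than any single hard estimate.
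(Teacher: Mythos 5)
Your proposal is correct, but it is organized differently from the paper's proof, which is largely a proof by citation: the paper gets (i)$\Leftrightarrow$(ii)$\Leftrightarrow$(iii) as a ``formal consequence'' of Theorem~\ref{Theorem : Strong tenseness} and Corollary~\ref{corollary:duality} by pointing to the argument of Royo Prieto--Saralegi-Aranguren--Wolak, proves (iii)$\Leftrightarrow$(iv) using Sergiescu's sheaf-coefficient duality~\eqref{eq:tpdS}, and proves (iv)$\Leftrightarrow$(v) by combining the up-to-sign identity of the two holonomies with Corollary~\ref{cor:C}. You instead route everything through the single pivot $[\kappa]=0$: tautness via the rescaling argument (which is exactly the mechanism of Lemma~\ref{lem:char}), condition (iii) via the direct computation that a $d_\kappa$-parallel basic function is either nowhere zero or identically zero, condition (ii) via the twisted duality~\eqref{eq:tpdKT2}, and conditions (iv)--(v) via Theorem~\ref{thm:alv}; you never use~\eqref{eq:tpdS}. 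What your route buys is self-containedness --- it unpacks the external citation and makes explicit that the whole corollary rests only on Theorems~\ref{Theorem : Strong tenseness} and~\ref{thm:alv} plus Corollary~\ref{corollary:duality} --- while the paper's route is shorter and keeps the sheaf-theoretic statement~\eqref{eq:tpdS} doing the work for (iv). Two small remarks. First, you read (v) as a statement about the determinant line bundle $\det\mathcal{C}$, which is the only reading compatible with the ``$\Aut(\RR)\cong\RR^{\times}$'' in the statement; the paper's own proof invokes Corollary~\ref{cor:C}, indicating that (v) is meant for the full commuting sheaf $\mathcal{C}$, so under that reading you should add one application of Corollary~\ref{cor:C} --- a one-line fix, since that corollary is proved in the paper. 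Second, the ``genuine difficulty'' you flag at the end is not actually one: your computation of $H^0_\kappa(M/\F)$ needs no Poincar\'e lemma on the leaf space, only local primitives of the closed form $\kappa$ on the connected manifold $M$ itself, together with the observation that any primitive of a basic $1$-form is automatically basic; so that step is legitimate on non-compact $M$ without further bookkeeping.
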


Corollary~\ref{corollary:tautness characterization} generalizes~\cite[Th\'{e}or\`{e}me~A]{Molino Sergiescu} and~\cite[Minimality Theorem]{Masa} to Riemannian foliations of dimension one on possibly non-compact manifolds. For Riemannian foliations of any dimension which can be suitably embedded into a singular Riemannian foliation on a compact manifold, the equivalence of the first three statements of Corollary~\ref{corollary:tautness characterization} are shown in~\cite{royo-saralegi-wolak1,royo-saralegi-wolak2} by the application of Dom\'inguez's theorem.

\subsubsection{The Euler class and the Gysin sequence}
In~\cite{royo}, the Euler class and the Gysin sequence of Riemannian flows on compact manifolds were obtained by using Dom\'{i}nguez's tenseness theorem. Theorems~\ref{Theorem : Strong tenseness} and~\ref{Theorem: dichotomy} allow us to obtain the Euler class and the Gysin sequence of transversally complete Riemannian flows on possibly non-compact manifolds.

\begin{corollary}\label{cor:Gysin}
Let $\F$ be an oriented transversally complete Riemannian flow on a possibly non-compact manifold $M$. Then, we get the following long exact sequence:
\begin{equation*}
\xymatrix{ \cdots\rightarrow
H^i(M/\F)\ar[r]
&H^i(M)\ar[r]
&H^{i-1}_{\kappa}(M/\F)\ar[r]^(0.4){\wedge e}
&H^{i+1}(M/\F)\rightarrow\cdots\;,}
\end{equation*}
where $\kappa$ is a representative of the \'{A}lvarez class of $(M,\F)$ and the connecting morphism is the multiplication by the Euler class $e$ defined in the $(-\kappa)$-twisted basic cohomology $H^2_{-\kappa}(M/\F)$.
\end{corollary}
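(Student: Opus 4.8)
The plan is to split into the two cases provided by the dichotomy (Theorem~\ref{Theorem: dichotomy}) and, in the main case, to adapt the construction of~\cite{royo} to the non-compact setting by means of a strongly tense metric. If $(M,\F)$ is an $\RR$-bundle, then $M$ is homotopy equivalent to its leaf space and the sequence is obtained by a direct computation, so I would assume from the outset that the closure of every leaf is compact; then Molino's structure theorem remains valid (Theorem~\ref{Theorem: Molino}). By Theorem~\ref{Theorem : Strong tenseness} I fix a strongly tense bundle-like metric, so that the mean curvature form $\kappa$ is basic and closed. Let $\xi$ be the unit vector field tangent to $\F$ and $\chi$ its dual characteristic $1$-form, so that $\iota_\xi\chi=1$.

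First I would produce the Euler form. A short computation gives $\mathcal{L}_\xi\chi=\kappa$, hence $\iota_\xi\,d\chi=\kappa$, and we may write $d\chi=\chi\wedge\kappa+\gamma$ with $\iota_\xi\gamma=0$. Since $\kappa$ is basic and closed one has $\mathcal{L}_\xi\gamma=\mathcal{L}_\xi\,d\chi-\mathcal{L}_\xi(\chi\wedge\kappa)=0$, so $\gamma$ is a basic $2$-form. Expanding $d(d\chi)=0$ yields $d\gamma+\kappa\wedge\gamma=0$, that is $d_{-\kappa}\gamma=0$, and I set $e=[\gamma]\in H^2_{-\kappa}(M/\F)$. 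For a $d_\kappa$-closed basic form $\omega$ one checks $d(\gamma\wedge\omega)=0$, so wedging with $\gamma$ defines a chain map from $(\Omega^{\bullet}(M/\F),d_\kappa)$ to $(\Omega^{\bullet+2}(M/\F),d)$, which will be the connecting morphism $\wedge e$.

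Next I would build the complex realizing the sequence. Consider the subspace $\mathcal{A}^\bullet=\{\tau+\chi\wedge\sigma : \tau,\sigma\ \text{basic}\}\subset\Omega^\bullet(M)$. The structure equation gives $d(\tau+\chi\wedge\sigma)=(d\tau+\gamma\wedge\sigma)-\chi\wedge d_\kappa\sigma$, so $\mathcal{A}^\bullet$ is a subcomplex and $(\sigma,\tau)\mapsto\tau+\chi\wedge\sigma$ identifies it with the mapping cone of $\wedge e$. The tautological short exact sequence
\begin{equation*}
0\to\Omega^\bullet(M/\F)\xrightarrow{\ \mathrm{incl}\ }\mathcal{A}^\bullet\xrightarrow{\ \iota_\xi\ }\Omega^{\bullet-1}(M/\F)\to 0,
\end{equation*}
whose quotient carries the twisted differential $d_\kappa$ up to sign and whose connecting homomorphism is exactly $\wedge e$, then yields the asserted long exact sequence, provided the inclusion $\mathcal{A}^\bullet\hookrightarrow\Omega^\bullet(M)$ induces an isomorphism on cohomology.

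The hard part is precisely this last quasi-isomorphism, namely that $H^\bullet(M)$ is computed by forms of the shape $\tau+\chi\wedge\sigma$ with $\tau,\sigma$ basic. In~\cite{royo} the compactness of $M$ is used at this point; in the present setting I would instead invoke Molino's structure theorem in the compact-leaf-closure case, where the leaf closures are the orbits of the transverse action of the (abelian) structural Lie algebra and the closure of the flow in the transverse isometry group is a torus acting with compact orbits. Averaging an arbitrary form over these compact leaf closures then projects $\Omega^\bullet(M)$ onto $\mathcal{A}^\bullet$ without altering cohomology classes, transversal completeness guaranteeing the compactness and invariance needed for the averaging to be well defined and to furnish a chain-homotopy inverse of the inclusion. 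Establishing this is the main technical obstacle; once it is in place, the long exact sequence of the displayed short exact sequence is the Gysin sequence, and the $\RR$-bundle case is appended by the direct computation noted at the start.
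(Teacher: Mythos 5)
Your proposal follows the same skeleton as the paper's proof: the dichotomy of Theorem~\ref{Theorem: dichotomy} reduces everything to the case of compact leaf closures, Theorem~\ref{Theorem : Strong tenseness} supplies a strongly tense metric there, and the rest is the construction of~\cite{royo}. Your algebraic part is correct and is exactly that of~\cite{royo}: closedness of $\kappa$ (i.e.\ \emph{strong} tenseness, which you rightly insist on) is what makes $\gamma=d\chi-\chi\wedge\kappa$ basic and $d_{-\kappa}$-closed, and your subcomplex $\mathcal{A}^{\bullet}$ with its short exact sequence yields the asserted long exact sequence once the inclusion $\mathcal{A}^{\bullet}\hookrightarrow\Omega^{\bullet}(M)$ is known to be a quasi-isomorphism.

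The genuine gap is in your resolution of that last step. You propose to average forms on $M$ over ``the closure of the flow in the transverse isometry group'', asserting that it is a torus acting on $M$ whose orbits are the leaf closures. Such a global compact group action fails to exist precisely in the cases the corollary is really about. If the closure of the flow in $\Diff(M)$ were a compact torus, the flow would be equicontinuous; averaging a metric over that torus makes the flow isometric, and normalizing the Killing field to unit length makes the orbits geodesics, i.e.\ the flow taut (cf.~\cite[Th\'{e}or\`{e}me~A]{Molino Sergiescu}). So your premise forces tautness, and it is contradicted by Carri\`ere's example recalled at the end of Section~\ref{sec:mol}: the flow on $\TT^{3}_{A}$ is an oriented transversally complete Riemannian flow whose leaf closures are the $\TT^{2}$-fibers, yet it is not taut (its \'{A}lvarez class is $(\log\lambda)[dt]\neq 0$), and the hyperbolic monodromy $A$ prevents any torus action on $\TT^{3}_{A}$ from having the fibers as orbits. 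If instead you mean by ``transverse isometry group'' the closure of the holonomy pseudogroup, that is a transverse object which does not act on $M$, so it cannot be used to average forms on $M$. What does exist when leaf closures are compact is Molino's commuting sheaf of \emph{local} actions on saturated neighborhoods; but its holonomy is nontrivial exactly in the non-taut case (Proposition~\ref{prop:hol}, Theorem~\ref{thm:alv}), and controlling that holonomy --- not transversal completeness, which only gives compactness of the leaf closures --- is what would be needed to glue local averaging operators into a global chain map homotopic to the identity. Two further unaddressed points: the partition into leaf closures is in general a singular foliation (the dimension of the closures can jump), so ``averaging over the leaf closures'' is not a smooth operator on $\Omega^{\bullet}(M)$; and even for invariant forms, the decomposition $\tau+\chi\wedge\sigma$ with $\tau,\sigma$ basic requires $\chi$ itself to be invariant under the local actions, which is the nontrivial content of Lemma~\ref{lem:char}, not a formality.

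The paper avoids all of this by keeping the key step local: the quasi-isomorphism is proved in~\cite{royo} by a Mayer--Vietoris argument over Carri\`ere neighborhoods, and the only point to verify in the non-compact setting is that these saturated model neighborhoods still exist, which they do around every compact leaf closure by~\cite[Proposition~3]{Carriere}. To repair your proof, replace the global averaging by that local argument; alternatively, averaging can be made legitimate after Molino's Theorem~\ref{Theorem: Molino} by lifting to a covering that is an honest principal $\TT^{k}$-bundle and descending, in the spirit of Section~5 of the paper, but that is a different and substantially longer argument than the one you sketched.
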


\begin{proof}[Outline of the proof]
If $(M,\F)$ is an $\RR$-bundle, then the Euler class and the Gysin sequence of $(M,\F)$ are trivial. Thus, by Theorem~\ref{Theorem: dichotomy}, it is essential to construct them in the case where the closure of every leaf of $\F$ is compact. In this case, with Theorem~\ref{Theorem : Strong tenseness}, the construction of~\cite{royo} of the Euler class and the Gysin sequence can be carried out without any modification. Note that, since the closure of every leaf is compact, any leaf has a good saturated neighborhood described by Carri\`ere in~\cite[Proposition~3]{Carriere}, which is called a Carri\`ere neighborhood in~\cite{royo}.
\end{proof}

\subsubsection{Generalization of Tondeur's theorem}
By Theorem~\ref{Theorem : Strong tenseness}, we obtain the generalization of the main result of~\cite{Tondeur killing}.

\begin{corollary}
Let $\F$ be a foliation of dimension one on a possibly non-compact manifold $M$. Then $\F$ is transversally complete Riemannian if and only if there exists a complete metric $g$ on $M$ such that the tangent bundle of $\F$ is locally generated by Killing vector fields on $(M,g)$.
\end{corollary}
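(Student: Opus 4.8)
The nontrivial implication is ($\Rightarrow$), and the plan is to obtain it from a purely local construction applied to a strongly tense metric. Suppose $\F$ is transversally complete Riemannian of dimension one. I would first apply Theorem~\ref{Theorem : Strong tenseness} to obtain a bundle-like metric $g$ whose mean curvature form $\kappa$ is basic \emph{and closed}, and then show that for such a $g$ the line field $T\F$ is automatically locally generated by Killing fields. Fix a point, choose a local orientation of $\F$, and let $T$ be the corresponding unit leaf field, so that $\kappa(Y)=g(\nabla_{T}T,Y)$ and $\kappa(T)=0$. For a positive function $h$ and $X=hT$, a direct computation gives
\begin{equation*}
(\mathcal{L}_{X}g)(Y,Z)=(Yh)\,g(T,Z)+(Zh)\,g(T,Y)+h\,(\mathcal{L}_{T}g)(Y,Z).
\end{equation*}
Testing this on a local frame made of $T$ and foliate fields orthogonal to the leaves, and using that $g$ is bundle-like (whence $\mathcal{L}_{T}g$ vanishes on the transverse frame while $(\mathcal{L}_{T}g)(T,Y)=\kappa(Y)$), one sees that $X$ is Killing if and only if $h$ is basic and $d\log h=-\kappa$. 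Since $\kappa$ is basic and closed, the basic Poincar\'e lemma yields, near the chosen point, a basic $u$ with $du=-\kappa$; then $h=e^{u}$ solves the equation and $X=hT$ is a local Killing field spanning $T\F$. It is exactly the closedness of $\kappa$---that is, \emph{strong} rather than plain tenseness---that makes this local equation solvable, which explains why Tondeur's compact theorem needs no completeness hypothesis, there tenseness already forcing $\kappa$ to be closed.

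To finish ($\Rightarrow$) I must ensure the strongly tense metric can be chosen \emph{complete}, and this is where I would invoke Theorem~\ref{Theorem: dichotomy}. If $(M,\F)$ is an $\RR$-bundle it is taut, and a complete bundle-like metric with vanishing (hence basic and closed) mean curvature is built directly from a complete metric on the base. Otherwise every leaf closure is compact, Molino's structure theorem (Theorem~\ref{Theorem: Molino}) presents $M$ over the space of leaf closures with compact fibers, and a complete metric is obtained by combining a complete basic metric on the base with the compact fibers while keeping $\kappa$ basic and closed. I expect this reconciliation of completeness with strong tenseness to be the main technical point; by contrast, the local Killing construction above is immediate once $\kappa$ is basic and closed.

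For ($\Leftarrow$), suppose $g$ is complete and $T\F$ is locally generated by Killing fields. Around each point the flow of such a local Killing field preserves $g$ and, being tangent to $\F$, maps leaves to leaves, so it preserves the induced transverse metric. Since the field spans $T\F$, the holonomy of $\F$ is realized by local isometries of the transversal, the transverse metric is therefore holonomy invariant, and $g$ is bundle-like; thus $\F$ is Riemannian. Finally, transversal completeness follows from the completeness of $g$ (Definition~\ref{definition:TC}), which closes the equivalence.
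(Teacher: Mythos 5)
Your proposal is correct and takes essentially the same route as the paper's own (outline) proof: the forward direction is Tondeur's argument run with Theorem~\ref{Theorem : Strong tenseness} in place of Dom\'inguez's theorem---your computation that $X=hT$ is Killing iff $h$ is basic and $d\log h=-\kappa$ is exactly the step the paper delegates to the citation of Tondeur---while completeness of the metric is extracted from the dichotomy (Theorem~\ref{Theorem: dichotomy}), and the converse is the same elementary holonomy-invariance argument. The one point to make explicit (left implicit both by you and by the paper) is that the strongly tense metric of Theorem~\ref{Theorem : Strong tenseness} need not itself be bundle-like and complete, but its transverse part can be replaced by the complete holonomy-invariant one coming from transversal completeness without changing $\kappa$, since $\kappa$ depends only on the leafwise metric and the orthogonal complement.
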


\begin{proof}[Outline of the proof]
The ``if'' part is proved by an argument similar to~\cite{Tondeur killing} with Theorem~\ref{Theorem : Strong tenseness}. To prove the ``only if'' part, note that, by Theorem~\ref{Theorem: dichotomy}, any transversally complete Riemannian foliation of dimension one admits a complete bundle-like metric.
\end{proof}

\begin{org}
Section~\ref{sec:2}  is devoted to recall the definition of fundamental notions. In Section~\ref{sec:3}, the dichotomy result (Theorem~\ref{Theorem: dichotomy}) is proved. In Section~\ref{sec:4}, we analyze the special case of linearly foliated torus bundles. In Section~\ref{sec:5}, the main result (Theorem~\ref{Theorem : Strong tenseness}) is proved based on Theorem~\ref{Theorem: dichotomy} and the results in Section~\ref{sec:4}.
\end{org}

\section{Fundamental notions}\label{sec:2}

\subsection{Foliations and metrics}
We recall some notions on foliated manifolds and metrics on them. A {\em Haefliger cocycle} (of codimension $q$) on a manifold $M$ is a triple $(\{U_i\},\{\pi_i\},\{\gamma_{ij}\})$ consisting of
\begin{enumerate}
\item an open covering $\{U_i\}$ of $M$,
\item submersions $\pi_i:U_i\to \RR^q$,
\item local diffeomorphisms $\gamma_{ij}:\pi_j(U_i\cap U_j)\to \pi_i(U_i\cap U_j)$ such that $\pi_i=\gamma_{ij}\circ \pi_j$.
\end{enumerate}
Two Haefliger cocycles on $M$ are said to be equivalent if their union becomes a Haefliger cocycle on $M$ after considering the necessary additional maps $\gamma_{ij}$. Recall that a codimension $q$ foliation of $M$ is defined by an equivalence class of Haefliger cocycles of codimension $q$.

A foliation $\F$ is called {\em Riemannian} if there exist Riemannian metrics $h_{i}$ on $\pi_{i}(U_{i})$ such that $\gamma_{ij}^{*}h_{i} = h_{j}$. Let $\nu \F$ denote the normal bundle $TM/T\F$ of $(M,\F)$. Here $(\pi_{i})_{*} : \nu_{x}\F \to T_{\pi_{i}(x)}\RR^{q}$ is an isomorphism at each point $x\in U_{i}$. By pulling back the metric $h_{i}$ by $(\pi_{i})_{*}$ to $\nu_{x}\F$ at each point $x\in U_{i}$, we get a metric on $\nu \F|_{U_{i}}$. This gives rise to a well-defined metric $g$ on $\nu \F$. Such metric on $\nu \F$ constructed from $\{h_{i}\}$ is called {\em holonomy invariant}. We will say that a metric $g$ on $(M,\F)$ is {\em bundle-like} if the metric induced on $\nu \F$ via the identification $\nu \F \cong (T\F)^{\perp}$ is holonomy invariant.

It is easy to see that any manifold with a Riemannian foliation admits a bundle-like metric. In~\cite[Proposition~{2}]{Reinhart} (see also~\cite[Proposition~3.5]{Molino}), Reinhart proved that a metric on $(M,\F)$ is bundle-like if and only if a geodesic whose initial vector is orthogonal to $\F$ is orthogonal to $\F$ everywhere. The following is the notion of completeness in the transverse direction of Riemannian foliations.

\begin{definition}\label{definition:TC}
A Riemannian foliation $\F$ on a connected manifold $M$ is {\em transversally complete} if there exists a Riemannian metric $g$ which is both bundle-like and transversally complete; namely, at every point, on any maximal geodesic that is orthogonal to the leaves, the natural parameter changes from $-\infty$ to $\infty$.
\end{definition}

\begin{remark}
The definition of transversally completeness we use here is different from the one adopted in~\cite[Definition~4.1]{Molino}.
\end{remark}

\subsection{Tautness and tenseness}
For a given Riemannian metric $g$ on a foliated manifold $(M,\F)$, the {\em mean curvature form} $\kappa\in\Omega^1(M)$ of $\F$ at $x$ is the mean curvature form of the leaf which goes through $x$ (see, for example,~\cite[Section~10.5]{Candel Conlon} for formulas of $\kappa$ in terms of $g$).

Recall that, on a foliated manifold $(M,\F)$ represented by a Haefliger cocycle $(\{U_i\},\{\pi_i\},\{\gamma_{ij}\})$, a $k$-form $\alpha$ is called {\em basic} if, for every $i$, there exists a $k$-form $\alpha_{i}$ on $\pi_{i}(U_{i})$ such that $\alpha|_{U_{i}} = \pi_{i}^{*}\alpha_{i}$. Let us recall the following terminologies.

\begin{definition}
A Riemannian metric $g$ on a foliated manifold $(M,\F)$ is said to be {\em tense} (resp., {\em taut}) if the mean curvature form $\kappa$ is basic (resp., trivial).
\end{definition}

We similarly define the following notion according to strongly tenseness of foliated manifolds.
\begin{definition}
A Riemannian metric $g$ on a foliated manifold $(M,\F)$ is said to be {\em strongly tense} if the mean curvature form $\kappa$ is basic and closed.
\end{definition}
Strongly tenseness can be considered as a variant of tautness twisted with a real line bundle (see~\cite[Proposition~7.5]{Nozawa 2}).

\begin{remark}
By a result of Kamber-Tondeur~\cite[Eq.~4.4]{Kamber Tondeur 2}, if $M$ is compact, then any tense metric on $(M,\F)$ is strongly tense.
\end{remark}

\subsection{Characteristic forms}
For a Riemannian manifold $(M,g)$ with an oriented $p$-dimensional foliation $\F$, the characteristic form $\chi\in\Omega^p(M)$ is defined by
\begin{equation}\label{formula:def chii}
\chi(X_1,\dots,X_p)=\operatorname{det}(g(X_i,E_j)_{ij})\;,\quad\forall X_1,\dots,X_p\in C^{\infty}(TM)\;,
\end{equation}
where $\{E_1,\dots,E_p\}$ is a local oriented orthonormal frame of $T\F$.

The mean curvature form is determined by the characteristic form by Rummler's formula~\cite{Rummler}:
\begin{equation}\label{formula:rummler}
\kappa(Y)=-d\chi(Y,E_1,\dots,E_p)\;,\quad \forall Y\in C^{\infty}((T\F)^{\perp g})\;,
\end{equation}
where $\{E_1,\dots,E_p\}$ is a local oriented orthonormal frame of $T\F$. Notice that $\chi$ is determined by the orthogonal complement $(T\F)^{\perp g}$ and the metric along the leaves, hence so is $\kappa$.

\begin{definition}
The characteristic form $\chi$ of an oriented foliation $\F$ on a Riemannian manifold $(M,g)$ is said to be {\em tense}, {\em strongly tense} or {\em taut} if $g$ is tense, strongly tense or taut, respectively.
\end{definition}

\section{A dichotomy on leaves}\label{sec:3}

Let $M$ be a smooth manifold with a transversally complete Riemannian foliation $\mathcal{F}$ of dimension one. We show the following dichotomy.
\setcounter{section}{1}
\setcounter{theorem}{3}
\begin{theorem}[{\bf bis}.]
Let $M$ be a connected manifold with a transversally complete Riemannian foliation $\mathcal{F}$ of dimension one. Then, one of the following holds:
\begin{enumerate}
\item $(M,\mathcal{F})$ is an $\mathbb{R}$-bundle or
\item the closure of every leaf of $\mathcal{F}$ is compact.
\end{enumerate}
\end{theorem}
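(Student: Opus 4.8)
The plan is to reduce the whole statement to a fact about the closure of a single one‑parameter group inside a Lie group, exploiting that $\dim\F=1$ so that all the transverse dynamics is carried by one flow. First I would lift $\F$ to the transverse orthonormal frame bundle $\pi\colon\hat M\to M$, obtaining the lifted foliation $\hat{\F}$, which is transversally parallelizable by Molino's theory~\cite{Molino}. The transverse completeness of the chosen bundle-like metric should amount precisely to the completeness of the standard horizontal fields of $\hat M$, whose integral curves project to the geodesics orthogonal to $\F$; granting this, $\hat{\F}$ is a complete transversally parallelizable foliation and its leaf closures are well understood. Since $\dim\F=1$, the foliation $\hat{\F}$ is one‑dimensional and is the orbit foliation of a complete foliate vector field; I would fix such a field and call its flow $\Phi_t$. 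Its orbits are the leaves of $\hat{\F}$, and the leaf closures are the orbit closures $\overline{\{\Phi_t(\hat x)\}}$.

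The heart of the argument is then the following dimension‑one mechanism. The diffeomorphisms of $\hat M$ preserving the canonical transverse parallelism form a Lie group $G$ acting properly on $\hat M$ (by a theorem of Kobayashi on automorphisms of parallelisms), and $\Phi$ is a one‑parameter subgroup of $G$. Let $H=\overline{\{\Phi_t\}}\subset G$; being the closure of a one‑parameter subgroup, $H$ is a connected abelian Lie group, isomorphic to $\TT^a\times\RR^b$, in which $\{\Phi_t\}$ is dense (so I do not even need to invoke that the structural algebra of a Riemannian flow is abelian~\cite{Carriere} — it is automatic here). A one‑parameter subgroup can be dense in $\TT^a\times\RR^b$ only when $b=0$ or $(a,b)=(0,1)$, so $H$ is either a torus $\TT^a$ or $H\cong\RR$. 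Because the $G$-action is proper, the closed subgroup $H$ acts properly too, hence each orbit $H\cdot\hat x$ is a closed embedded submanifold with compact isotropy, and a short continuity argument gives $\overline{\{\Phi_t(\hat x)\}}=H\cdot\hat x$. Thus either every leaf closure of $\hat{\F}$ is compact (case $H=\TT^a$), or every leaf of $\hat{\F}$ is itself closed and diffeomorphic to $\RR$ (case $H\cong\RR$, where the isotropy, a compact subgroup of $\RR$, is trivial). As $H$ is a single group, exactly one alternative occurs, which (using connectedness of $M$) will yield a global dichotomy.

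To descend to $M$ I would use that $\pi$ is proper with compact fibre $\OO(q)$ and that leaf closures upstairs project onto leaf closures downstairs. In the compact case this forces every leaf closure of $\F$ to be compact, which is alternative (2). In the case $H\cong\RR$, any compact leaf downstairs would have its (closed) preimage meet $\overline{\hat L}$ in a compact set, contradicting $\overline{\hat L}\cong\RR$; hence every leaf of $\F$ is closed and diffeomorphic to $\RR$, the leaf space $M/\F$ is a Hausdorff manifold, and $\pi_{\F}\colon M\to M/\F$ is a submersion. Integrating the distribution orthogonal to $\F$ against the fibres of $\pi_{\F}$, and using the completeness of the transverse metric to guarantee that the flows are defined for all time, an Ehresmann-type argument then provides local trivializations, so $(M,\F)$ is an $\RR$-bundle, which is alternative (1).

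I expect the main obstacle to lie in the completeness bookkeeping. The paper's transverse completeness (orthogonal geodesics defined for all parameter values) differs from Molino's notion, so I must carefully verify that it delivers both the hypotheses of Molino's structure theorem on $\hat M$ and, crucially, the completeness of the generating flow $\Phi_t$, even though completeness along the leaves of $\F$ is not assumed and cannot be arranged by a basic reparametrization. A second delicate point is the final assembly of the $\RR$-bundle: promoting ``every leaf is a properly embedded copy of $\RR$'' to a genuine locally trivial fibration, where I would rely on the completeness of the transverse metric and the properness of the action rather than on any compactness. Everything else — the structure of closures of one‑parameter subgroups and the transfer of compactness through the proper map $\pi$ — is then routine.
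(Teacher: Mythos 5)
The proposal has a fatal gap at its core mechanism. The set $G$ of diffeomorphisms of $\hat M$ preserving the canonical \emph{transverse} parallelism of $\hat\F$ is not a finite-dimensional Lie group: it contains every foliated diffeomorphism acting trivially on transverse data, in particular the flows of all vector fields tangent to $\hat\F$ (if $Z$ is tangent to $\hat\F$ and $\bar Y$ is a foliate transverse field, then $[Z,Y]\in\Gamma(T\hat\F)$, so the flow of $Z$ fixes $\bar Y$), and this is an infinite-dimensional group. Kobayashi's theorem concerns automorphisms of an \emph{absolute} parallelism of the manifold, and it does not apply here. Nor can you repair this by completing the transverse parallelism to an absolute one by adjoining a tangent field $Z$: the flow $\Phi_t$ of $Z$ preserves the transverse fields only modulo $T\hat\F$ (one has $[Z,Y_i]$ tangent to $\hat\F$, not zero), so $\Phi_t$ is in general \emph{not} a one-parameter subgroup of the automorphism group of that absolute parallelism. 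Consequently the whole engine of your argument --- the closed subgroup $H=\overline{\{\Phi_t\}}\cong\TT^a\times\RR^b$, the properness of its action, the closedness of its orbits and the identification of orbit closures with leaf closures --- has no foundation. (Incidentally, your stated worry about completeness of $\Phi_t$ is the one point that \emph{is} harmless: any positive rescaling of a tangent field is still tangent, hence still foliate, and a rescaling by a function bounded with respect to a complete auxiliary metric makes it complete; it is the Lie-group framework around it that collapses.) The correct structural substitute for your mechanism is Molino's structure theorem for transversally complete TP foliations (leaf closures are the fibers of the basic fibration, each carrying a complete Lie foliation with dense leaves) combined with Molino's Lemme~3 of~\cite{Molino 2} for dimension-one Lie foliations; that is genuinely more machinery than ``routine,'' and you would still have to verify that the paper's notion of transverse completeness yields Molino's completeness hypothesis on $\hat M$.

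By contrast, the paper avoids all structure theory and group actions. It proves directly that a non-compact, non-proper leaf has compact closure, using an accumulation sequence in a transversal disk together with the foliated normal exponential map (Lemma~\ref{lemma:exponentialmap}); it then shows that one compact leaf closure forces all leaf closures to be compact, by transporting orthogonal geodesics along leaves; finally, if there is a non-compact \emph{proper} leaf, it invokes \u{Z}ukova's stability theorem~\cite{Zukova} to produce saturated tubular neighborhoods that are $\RR$-bundles. This last step is exactly what your closing ``Ehresmann-type argument'' hand-waves: transverse completeness gives completeness of \emph{orthogonal geodesics}, not completeness of horizontal lifts of arbitrary paths in $M/\F$, and even the Hausdorffness of $M/\F$ when all leaves are closed requires an argument; the paper outsources precisely this stability issue to \u{Z}ukova. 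Your descent arguments through the proper projection $p\colon\hat M\to M$ (compactness downstairs from compactness upstairs, and the exclusion of circle leaves downstairs in the $\RR$ case) are fine as far as they go, but they sit on top of a mechanism that does not exist.
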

\setcounter{section}{3}
\setcounter{theorem}{1}

\begin{remark}\label{rem:DicMol}
A result~\cite[Lemme~3]{Molino 2} of Molino implies that the leaves of any Lie foliation of dimension one which is transversally complete in the sense of~\cite[Definition 4.1]{Molino} are either all closed or all have compact closures. Theorem~\ref{Theorem: dichotomy} is its generalization proved by a similar argument.
\end{remark}

We will need the following lemma to prove Theorem~\ref{Theorem: dichotomy}.

\begin{lemma}[{\cite[Proposition 6.6]{Molino}}]\label{lemma:exponentialmap}
Let $M$ be a connected manifold with a Riemannian foliation $\F$ and a transversally complete bundle-like metric $g$. Let $P$ be a plaque of a leaf of $\F$. The foliation naturally induced on the normal bundle $\nu\F|_{P}$ restricted to $P$ is denoted by $\mathcal{G}_{P}$. Then, the following properties hold:
\begin{itemize}
\item[(i)] The exponential map $\exp\colon(\nu\F|_{P},\mathcal{G}_{P})\longrightarrow(M,\F)$ is a well defined foliated map.
\item[(ii)] If $P$ is relatively compact, then there exists an open neighborhood $U$ of the zero section of $\nu\F|_{P}$ such that $\exp|_{U}$ is a diffeomorphism.
\end{itemize}
\end{lemma}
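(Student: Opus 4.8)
The plan is to prove the two assertions separately: the well-definedness of $\exp$ will come directly from transverse completeness, the foliated property from the local Riemannian submersion model, and (ii) from the tubular neighborhood theorem made uniform by relative compactness. First I would fix the identification $\nu\F\cong(T\F)^{\perp g}$ induced by $g$, so that a normal vector $v\in\nu_x\F$ over $x\in P$ is viewed as a vector in $(T_x\F)^{\perp g}$ and $\exp(v)=\gamma_v(1)$, where $\gamma_v$ is the $g$-geodesic with $\gamma_v(0)=x$ and $\dot\gamma_v(0)=v$. Since $g$ is transversally complete, every geodesic whose initial vector is orthogonal to $\F$ is defined for all values of its natural parameter; hence $\gamma_v(1)$ makes sense for every $v\in\nu\F|_P$ and $\exp$ is defined on the whole total space. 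Moreover, by Reinhart's characterization recalled above, $\gamma_v$ remains orthogonal to $\F$ everywhere, a property I will use repeatedly.

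For the foliated property I would pass to the local model of a Riemannian foliation with a bundle-like metric as a Riemannian submersion. Around any point of $M$ choose a distinguished chart in which $\F$ is given by a Riemannian submersion $\pi\colon U\to B$ onto the local leaf space, with plaques as fibres. Recall that $\mathcal{G}_P$ is the foliation of $\nu\F|_P$ by the sections that are parallel for the canonical flat connection along $P$ (the restriction of the Bott/transverse Levi-Civita connection); in the submersion picture these are exactly the restrictions to a fibre of the basic normal fields, i.e.\ the horizontal lifts of a fixed tangent vector $w\in T_bB$. Given such a parallel section $\sigma$ over a fibre $F_b\subset U$, each geodesic $t\mapsto\exp(t\sigma(p))$ is orthogonal to $\F$, hence horizontal for $\pi$, so it projects to the $B$-geodesic issued from $b$ with velocity $w$; by uniqueness of geodesics in $B$ this projection is independent of $p\in F_b$. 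Consequently $\pi(\exp(\sigma(p)))$ does not depend on $p$, so $\exp$ maps the $\mathcal{G}_P$-leaf through $\sigma$ into a single fibre, i.e.\ into a leaf of $\F$; equivalently, the variation field $J(t)=\partial_s\exp(t\sigma(c(s)))$ is $\pi$-vertical and hence tangent to $\F$. Since a leaf of $\mathcal{G}_P$ is connected (being diffeomorphic to $P$) and every geodesic $\gamma_v$ is covered by finitely many such charts over any compact parameter interval, patching these local statements along paths in $P$ shows that the image of each $\mathcal{G}_P$-leaf lies entirely in one leaf of $\F$; this proves (i).

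For (ii) I would argue as in the tubular neighborhood theorem. On the zero section $\exp$ restricts to the inclusion $P\hookrightarrow M$, and at each $0_x$ the differential $d\exp_{0_x}\colon T_{0_x}(\nu\F|_P)\cong T_xP\oplus\nu_x\F\to T_xM\cong T_xP\oplus(T_x\F)^{\perp g}$ is the identity, so $\exp$ is a local diffeomorphism near every point of the zero section. Using that $P$ is relatively compact (so that $\overline{P}$ is compact), a Lebesgue-number compactness argument produces a uniform $\varepsilon>0$ such that $\exp$ is simultaneously an immersion and injective on $U=\{v\in\nu\F|_P:|v|<\varepsilon\}$, whence $\exp|_U$ is a diffeomorphism onto an open neighborhood of $P$.

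The main obstacle is the foliated property in (i): one must show that the exponential map intertwines the abstractly defined parallel foliation $\mathcal{G}_P$ with $\F$. The real content is that the transverse connection used to define $\mathcal{G}_P$ is precisely the connection realized by the normal geodesics of the bundle-like metric; the cleanest way to see this is the local Riemannian submersion model together with Reinhart's theorem, and the only genuine care needed is to patch the purely local ``same fibre'' statement into a global one along the possibly long normal geodesics, using transverse completeness to keep them defined and connectedness of the $\mathcal{G}_P$-leaves to conclude.
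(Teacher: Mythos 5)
The paper contains no proof of this lemma for you to be compared against: it is quoted verbatim from Molino's book (Proposition~6.6 there) and used as a black box in the proof of Theorem~\ref{Theorem: dichotomy}. So the relevant comparison is with the standard argument, which is essentially what you have reconstructed, and your skeleton is sound: Reinhart's characterization of bundle-like metrics keeps normal geodesics normal, transversal completeness makes $\exp$ globally defined on $\nu\F|_P\cong(T\F)^{\perp g}|_P$, the local Riemannian-submersion model plus O'Neill's horizontal-geodesic lemma gives the foliated property locally, your identification of $\mathcal{G}_P$ with the Bott-parallel sections over $P$ is the right one, and the tubular-neighborhood compactness argument is the right tool for (ii).

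Two steps, however, are asserted rather than proved, and they are where the actual work lies. For (i), the patching must carry a two-part invariant along the chain of charts $U_0,\dots,U_N$ covering a normal geodesic: at each transition time the two geodesics $\gamma_{\sigma(p)}$ and $\gamma_{\sigma(p')}$ must lie in the same \emph{plaque} of the next chart (same leaf is not enough to restart the submersion argument, and a plaque of $U_i$ may meet $U_{i+1}$ in several plaques of $U_{i+1}$), and their velocities must project to the same vector of the next local quotient (again a statement that depends on the component of the overlap, since the transition maps between local quotients are exactly the holonomy). Both conditions can only be propagated after first restricting to $p'$ close to $p$ in $P$, so that the geodesics stay uniformly close and ``same plaque of $U_i$ plus close'' forces ``same plaque of $U_{i+1}$''; one then globalizes over all of $P$ by an open--closed argument using connectedness of $P$. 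Your phrase ``patching these local statements along paths in $P$'' names this step but does not carry it out. For (ii), the uniform $\varepsilon$ requires local injectivity of $\exp$ at the accumulation points of $P$, i.e.\ at points of $\overline{P}$, which need not lie in the domain $\nu\F|_P$; one fixes this by enlarging $P$ to a plaque $P'$ with $\overline{P}\subset P'$ (in this paper the lemma is only ever applied to compact segments $J_i$ inside a leaf, where the issue is vacuous). Neither point undermines your approach --- they are precisely the details the cited source supplies --- but as written they would need to be filled in.
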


\begin{proof}[Proof of Theorem~\ref{Theorem: dichotomy}]
Let $L$ be a non-compact and non-proper leaf of $\F$. We will prove that $\overline{L}$ is compact. Let $T$ be a transversal of $\F$ containing a  point $x\in\overline{L}$. Consider  the metric on $T$ induced by the transverse metric on $(M,\F)$. Let $K$ be an open disk in $T$ centered at $x$ whose  radius is small enough so that $\overline{K}$ is compact. We identify $L$ with $\RR$. Since $x\in \mathop{\rm int}(K)\cap\overline{L}$ and $L$ is non-proper, we can assume that there exists a strictly monotonically increasing sequence $\{x_{i}\}_{i \in \ZZ_{\geq 0}}$ in $L \cap K$ such that $\lim_{i \to \infty} x_{i} = \infty$.

First, by reductio ad absurdum, we will show that there exists a strictly monotonically increasing sequence $\{x_{i}\}_{i \in \ZZ_{<0}}$ in $L \cap K$ such that $\lim_{i \to -\infty} x_{i} = -\infty$. Assume that such $\{x_{i}\}_{i \in \ZZ_{<0}}$ does not exist. Then there exists $z$ in $L \cap K$ such that $K \cap \{ y \in L \mid y < z \}=\emptyset$. For $r > 0$, let $D(r)$ be the disk in $T$ centered at $z$ of radius $r$. Take a small $r$ so that $D(r)\subset K$. Without losing generality we can assume that $D(r)$ is orthogonal to $\F$ and that $x_{0}$, $x_{1}$ and $x_{2}$ belong to $D(r/2)$. Clearly, at each point $y$ in $D(r/2)$, the image of the disk of radius $r/2$ centered at $0$ in $\nu_{y}\F$ under the exponential map at $y$ is contained in $D(r)$. Let $J$ be the closed segment in $L$ which connects $x_{0}$ and $x_{2}$. Then, by Lemma~\ref{lemma:exponentialmap}-(i), a leaf of $\nu \F|_{J}$ is mapped onto a closed neighborhood $S$ of $z$ in $L$ by the normal exponential map along $J$. By construction, $S$ intersects $D(r)$ at its endpoints. Thus it contradicts with the hypothesis $K \cap \{ y \in L \mid y < z \}=\emptyset$. Thus, there exists a strictly monotonically increasing sequence $\{x_{i}\}_{i \in \ZZ}$ in $L \cap K$ such that
\[
\lim_{i \to -\infty} x_{i} = -\infty\;, \quad \lim_{i \to \infty} x_{i} = \infty\;.
\]
Since $\overline{K}$ is compact, both $\{x_{i}\}_{i \in \ZZ_{\geq 0}}$ and $\{x_{-i}\}_{i \in \ZZ_{\geq 0}}$ have an accumulation point in $\overline{K}$. By taking subsequences, we can assume that both $\{x_{i}\}_{i \in \ZZ_{\geq 0}}$ and $\{x_{-i}\}_{i \in \ZZ_{\geq 0}}$ are Cauchy sequences in $\overline{K}$.
Let $J_{i}$ be the segment in $L$ which connects $x_{i}$ and $x_{i+1}$. We fix $r_{0} > 0$. By the transversally completeness of $\F$ and Lemma~\ref{lemma:exponentialmap}-(ii), we can take a subset $U_{i}$ of $M$ by $U_{i} = \cup_{y \in J_{i}} \exp_{y}(A_{y})$, where $\exp_{y} : \nu_{y} \F \to M$ is the normal exponential map at $y$ and $A_{y}$ is the disk of radius $r_{0}$ centered at $0$ in $\nu_{y} \F$ (note that $\exp_{y}|_{A_{y}}$ may not be a diffeomorphism, but it is not important here). Then $\{U_{i}\}_{i\in \ZZ}$ covers $\overline{L}\cap \overline{K}$. Since $\overline{L}\cap \overline{K}$ is compact, we choose a finite subset $I\subset \ZZ$ so that $\{U_{i}\}_{i\in I}$ covers $\overline{L}\cap \overline{K}$. Then, by construction, $\overline{L}$ is contained in $\cup_{i \in I} U_{i}$, which is a relatively compact subset of $M$. So $\overline{L}$ is compact.

We will show that if $\F$ admits a leaf $L$ whose closure is compact, then so is the closure of any leaf $L'$ of $\F$. Let $d$ denote the metric on $M$ induced by $g$. Here $l=d(\overline{L},\overline{L'})$ is bounded by the connectivity of $M$. The transversally completeness implies that, for any point $y\in \overline{L'}$, there exists a geodesic which connects $y$ and $\overline{L}$ of length $l$ (see~\cite[Lemma~4]{Reinhart} for the transport of orthogonal geodesics along leaves). Thus $\overline{L'}$ is contained in the $l$-neighborhood of $\overline{L}$, which implies compactness of $\overline{L'}$.

To complete the dichotomy, assume that $\F$ admits a non-compact proper leaf. Then, by the preceding arguments, every other leaf must be non-compact and proper. Moreover, by a theorem of \u{Z}ukova~\cite[Theorem~1]{Zukova}, any non-compact proper leaf $L$ of $\F$ admits an open tubular neighborhood $U$ such that $(U,\F|_{U})$ is an $\RR$-bundle. Thus, $\F$ is an $\RR$-bundle.
\end{proof}

A trivial $\RR$-bundle is taut with a product metric constructed with a trivialization. Any $\RR$-bundle admits a flat connection whose holonomy group is $\{\pm 1\}$ whose double cover is a trivial $\RR$-bundle. Such an $\RR$-bundle is taut with a metric whose lift to the double cover is a product metric. Thus, any $\RR$-bundle is taut. We get the following corollary:
\begin{corollary}
If $\F$ admits a leaf whose closure is non-compact, then $\F$ is taut.
\end{corollary}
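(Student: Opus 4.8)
The plan is to deduce the corollary directly from the dichotomy theorem (Theorem~\ref{Theorem: dichotomy}) combined with the tautness of $\RR$-bundles recorded in the paragraph immediately above. The point is that the hypothesis negates one of the two alternatives offered by the dichotomy, forcing the other.

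First I would apply Theorem~\ref{Theorem: dichotomy} to the connected manifold $M$ with its transversally complete Riemannian foliation $\F$ of dimension one. This gives that either $(M,\F)$ is an $\RR$-bundle, or the closure of every leaf of $\F$ is compact. By assumption $\F$ possesses a leaf whose closure is non-compact, so the second alternative fails. Therefore $(M,\F)$ must be an $\RR$-bundle.

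It then remains only to invoke the tautness of $\RR$-bundles. As recalled just before the statement, a trivial $\RR$-bundle is taut for a product metric built from a trivialization, and an arbitrary $\RR$-bundle admits a flat connection with holonomy contained in $\{\pm 1\}$ whose double cover is a trivial $\RR$-bundle; the product metric on the double cover descends to a taut metric on the base. Hence every $\RR$-bundle is taut, and in particular so is our $(M,\F)$, which finishes the proof.

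The substance of the corollary is entirely carried by its two ingredients, the dichotomy theorem and the tautness of $\RR$-bundles, both already in hand. Consequently I expect no genuine obstacle here: the argument is a single case distinction, and the only thing to check is that one non-compact leaf closure suffices to exclude alternative (2) of the dichotomy, which is immediate since the two cases are mutually exclusive.
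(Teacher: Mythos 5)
Your proposal is correct and is exactly the argument the paper intends: the corollary is stated immediately after the dichotomy theorem and the observation that every $\RR$-bundle is taut, and it follows by the same case elimination you describe. Nothing further is needed.
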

Below we will consider the case where the closure of every leaf is compact.

\section{Linearly foliated torus bundles}\label{sec:4}

\subsection{Definition of $(\mathbb{T}^k,\mathcal{F}_v)$-bundles}
Let $v$ be a nonzero vector in $\mathbb{R}^{k}$. Let $\mathcal{F}_{v}$ be the linear flow of slope $v$ on the torus $\mathbb{T}^{k}$.
\begin{definition}
A linearly foliated torus bundle, or a $(\mathbb{T}^k,\mathcal{F}_v)$-bundle is a torus bundle equipped with a defining $1$-cocycle valued in $\Diff(\mathbb{T}^{k},\mathcal{F}_{v})$.
\end{definition}

We will always assume that the leaves of $\mathcal{F}_{v}$ are dense in $\mathbb{T}^{k}$. Notice that the total space of a $(\mathbb{T}^{k},\mathcal{F}_{v})$-bundle has a one dimensional foliation, which is Riemannian. This foliation is called the {\em canonical foliation} of the $(\mathbb{T}^k,\mathcal{F}_v)$-bundle.

\subsection{Reduction to linearly foliated torus bundles}

The importance of the linearly foliated torus bundles comes from the following version of Molino structure theory for Riemannian foliations such that the closures of leaves are compact.

\begin{theorem}\label{Theorem: Molino}
Let $(M,\mathcal{F})$ be a manifold with a Riemannian foliation of dimension one and codimension $q$ such that the closure of each leaf is compact. Let $p \colon M^{1} \longrightarrow M$ be the orthonormal frame bundle of $\nu \F$. We have:
\begin{enumerate}
\item There exists an $\OO(q)$-invariant one dimensional Riemannian foliation $\mathcal{F}^{1}$ on $M^{1}$ such that the restriction of $p$ to each leaf of $\mathcal{F}^{1}$ is a covering map to a leaf of $\mathcal{F}$.
\item There exists a smooth $\OO(q)$-equivariant $(\mathbb{T}^k,\mathcal{F}_v)$-bundle $\pi_{b} \colon M^1 \longrightarrow W$ whose fibers are the closures of the leaves of $\mathcal{F}^1$.
\end{enumerate}
\end{theorem}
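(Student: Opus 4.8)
The plan is to derive both statements by lifting $\F$ to the transverse orthonormal frame bundle and invoking Molino's structure theory~\cite{Molino}, the role played by compactness of $M$ in the classical arguments being taken here by the compactness of the leaf closures. For (1) I fix a bundle-like metric $g$ and let $p\colon M^1\to M$ be the principal $\OO(q)$-bundle of orthonormal frames of $\nu\F$. The transverse Levi-Civita connection of $g$ together with the canonical solder form equip $M^1$ with a transverse parallelism, and the horizontal lift of $\F$, obtained by parallel transport of frames along the leaves, defines a one dimensional foliation $\F^1$ that is transversally parallelizable and invariant under the principal $\OO(q)$-action, as in Molino's lifting construction. Since $\F^1$ is transverse to the fibres of $p$, the restriction $p|_{L^1}\colon L^1\to L$ to each leaf is a local diffeomorphism; and since paths in $L$ lift into the leaves of $\F^1$ by parallel transport of frames, $p|_{L^1}$ has the path-lifting property and is therefore a covering map onto $L=p(L^1)$. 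This gives (1).

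For (2) I would first observe that the leaf closures of $\F^1$ are compact: any leaf satisfies $\overline{L^1}\subseteq p^{-1}\bigl(\overline{p(L^1)}\bigr)$, and the right-hand side is compact, being the $p$-preimage of a compact leaf closure of $\F$ with compact fibre $\OO(q)$. Hence $\F^1$ is a transversally parallelizable foliation all of whose leaf closures are compact, and Molino's structure theorem for such foliations provides a smooth locally trivial fibration $\pi_b\colon M^1\to W$ onto a manifold $W$ whose fibres are exactly the leaf closures of $\F^1$, all mutually diffeomorphic, with $\F^1$ restricting to a homogeneous Lie foliation on each fibre. Since the principal $\OO(q)$-action preserves $\F^1$, it permutes the leaf closures, hence descends to $W$ and renders $\pi_b$ equivariant.

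It then remains to identify the fibres and reduce the structure group. A fibre $F=\overline{L^1}$ is a compact manifold carrying the restriction of $\F^1$ as a one dimensional foliation with a dense leaf; since $\F^1$ is Riemannian, its structural Lie algebra is abelian, so $F$ is a compact homogeneous space of the associated abelian Lie group, that is, a torus $\TT^k$, and $\F^1|_F$ is the orbit foliation of a one-parameter subgroup, namely a linear flow $\F_v$ with dense leaves. The integer $k$ and the slope $v$ are independent of $F$ because the fibres are mutually diffeomorphic as foliated manifolds. Choosing local trivializations of $\pi_b$ that carry $\F^1$ to $\F_v$ on the fibre, which is possible since $\F^1$ is tangent to the fibres and transversally parallelizable near each compact fibre, forces the transition cocycle to take values in $\Diff(\TT^k,\F_v)$; and the equivariance of $\pi_b$ lets one arrange this cocycle $\OO(q)$-equivariantly. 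This exhibits $\pi_b$ as the desired $\OO(q)$-equivariant $(\TT^k,\F_v)$-bundle.

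The \emph{main obstacle} is the appeal to Molino's structure theorem when $M$ is non-compact. Classically the compactness of $M$ is what guarantees that the quotient $W=M^1/\overline{\F^1}$ is a Hausdorff manifold and that $\pi_b$ is proper and locally trivial. Here one must instead extract these global facts from the compactness of the individual leaf closures: around each compact fibre I would construct a saturated tubular neighbourhood with a product structure, using the normal exponential map along a compact plaque (well defined for small radius), and then verify that these local product models glue so as to yield the Hausdorffness and smooth chart structure of $W$ together with the properness and local triviality of $\pi_b$, exactly as in the compact case.
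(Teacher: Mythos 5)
Your overall strategy coincides with the paper's (whose own proof is also only an outline): part (i) is Molino's standard lifting construction, and part (ii) applies Molino's structure theory for transversally parallelizable foliations, with compactness of $M$ replaced by compactness of the leaf closures, followed by an identification of each fibre with $(\TT^k,\F_v)$. However, there is a genuine gap at the fibre-identification step. You assert that \emph{since $\F^1$ is Riemannian, its structural Lie algebra is abelian}. That implication is false for Riemannian foliations in general: if $\Gamma$ is a dense finitely generated subgroup of a compact semisimple Lie group $G$ and $\pi_1 B\to\Gamma$ is a surjection from the fundamental group of a closed manifold $B$, the suspension foliation on $\widetilde{B}\times_{\pi_1 B}G$ is a Riemannian (indeed Lie) foliation with dense leaves whose structural Lie algebra is $\Lie(G)$, which is not abelian. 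What is true is that the structural Lie algebra of a Riemannian \emph{flow} is abelian; but that is precisely the nontrivial content of Carri\`ere's theory of Riemannian flows, not a formal consequence of the Riemannian hypothesis. This is exactly the point where the paper invokes the theorem of Caron--Carri\`ere~\cite{Carriere Caron}: the restriction of $\F^1$ to a fibre is a one-dimensional Lie foliation with dense leaves on a compact manifold, hence foliated-diffeomorphic to $(\TT^k,\F_v)$. As written, your argument assumes the crux of the step it is supposed to establish; to repair it, replace the ``abelian structural Lie algebra'' claim by a citation of (or an argument along the lines of) Caron--Carri\`ere.

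On the obstacle you flag concerning non-compactness of $M^1$, your proposed repair (saturated product neighbourhoods of the compact fibres built from the normal exponential map, then glued) is plausible but is left entirely as a programme. The paper's route is more economical and is worth adopting: since each leaf closure of $\F$ is compact, each vector field of the transverse parallelism of $(M^1,\F^1)$ (coming from the basic connection and the canonical $1$-form) is complete on a neighbourhood of each leaf closure of $\F^1$, and this completeness is all that Molino's theory of transversally parallelizable foliations requires to conclude that the leaf closures form a foliation $\overline{\F^1}$ whose leaf space $W=M^1/\overline{\F^1}$ is a smooth manifold and whose projection $\pi_b\colon M^1\to W$ is the desired fibration. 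Your preliminary observation that $\overline{L^1}\subseteq p^{-1}\bigl(\overline{p(L^1)}\bigr)$ is compact is correct and is indeed the input needed for that completeness argument.
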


\begin{proof}[Outline of the proof]
We refer to~\cite[Chapters~4 and~5]{Molino} or~\cite[Chapter~4]{Moerdijk Mrcun} for the terminologies on the Molino theory. The part (i) is a general construction valid for any Riemannian foliation. We give an outline of the proof of the part (ii). $(M^{1},\F^{1})$ has a transverse parallelism $\{X_{1}, \ldots, X_{m}\}$ given by the basic connection on $\nu \F$ and the canonical $1$-form. Since the closure of each leaf of $\F$ is compact, each $X_{i}$ is complete on an open neighborhood of the closure of each leaf of $\F^{1}$. This implies that $(M^{1},\F^{1})$ is homogeneous. In particular, the closures of leaves of $\F^1$ define a foliation $\overline{\F^{1}}$ of $M^{1}$ whose leaf space $W=M^{1}/\overline{\F^{1}}$ is a smooth manifold. The restriction of $\F^1$ to a fiber $F$ of $M^{1} \to W$ is a Lie foliation of dimension one. Thus, a theorem of Caron-Carri\`{e}re~\cite{Carriere Caron} implies that $(F,\F^{1})$ is diffeomorphic to $(\mathbb{T}^{k},\F_{v})$.
\end{proof}

\begin{remark}\label{remark:proof of 1.2}
Theorem~\ref{Theorem: Molino} and the following lemma reduce the proof of Theorem~\ref{Theorem : Strong tenseness} to the case of linearly foliated torus bundles with a compact Lie group action.
\end{remark}
Let $(M^{\sharp},\F^{\sharp})$ be $(M^{1},\F^{1})$ if $\F^{1}$ is oriented, and otherwise a double cover of $(M^{1},\F^{1})$ such that $\F^{\sharp}$ is oriented. Let $G$ be $\OO(q)$ if $\F^{1}$ is oriented, and otherwise $\OO(q) \ltimes \ZZ/2\ZZ$.

\begin{lemma}\label{lema:reduction}
If $(M^{\sharp},\F^{\sharp})$ admits a $G$-invariant strongly tense metric $g^{\sharp}$ such that the $G$-orbits are orthogonal to $\F^{\sharp}$, then $(M,\F)$ admits a strongly tense metric.
\end{lemma}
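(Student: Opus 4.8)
The plan is to descend the metric $g^{\sharp}$ along the natural projection $p^{\sharp}\colon M^{\sharp}\to M$ obtained by composing the covering $M^{\sharp}\to M^{1}$ with $p$, and then to read off strong tenseness on $M$ from that on $M^{\sharp}$. Since the fibres of $p^{\sharp}$ are precisely the $G$-orbits and $g^{\sharp}$ is $G$-invariant with $G$ acting by isometries, I would first define a metric $g$ on $M$ by declaring $p^{\sharp}\colon(M^{\sharp},g^{\sharp})\to(M,g)$ to be a Riemannian submersion: at a point of $M$ one carries $g^{\sharp}$ across the $g^{\sharp}$-orthogonal complement of the $G$-orbit in any fibre, and $G$-invariance makes the result independent of the chosen point. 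By hypothesis the $G$-orbits are orthogonal to $\F^{\sharp}$, so $T\F^{\sharp}$ is horizontal and $dp^{\sharp}$ maps it isometrically onto $T\F$.

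The whole statement then reduces to the single identity $p^{\sharp*}\kappa=\kappa^{\sharp}$ between the mean curvature forms of $\F$ and of $\F^{\sharp}$. I would first note that $\kappa$ is a well-defined global $1$-form on $M$ although $\F$ need not be orientable, because for a one-dimensional foliation the mean curvature vector $\nabla_{E}E$ of a unit leafwise field is unchanged under $E\mapsto -E$. Granting the identity, closedness of $\kappa$ is immediate from $p^{\sharp*}(d\kappa)=d\kappa^{\sharp}=0$ and the injectivity of $p^{\sharp*}$. Basicness of $\kappa$ is then automatic: for a one-dimensional foliation one always has $\kappa(E)=g(\nabla_{E}E,E)=0$, so $i_{X}\kappa=0$ for every leafwise $X$, and together with $d\kappa=0$ this is exactly the condition for $\kappa$ to be basic. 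Hence $g$ is strongly tense.

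It remains to verify $p^{\sharp*}\kappa=\kappa^{\sharp}$, which is equivalent to the mean curvature vector of $\F^{\sharp}$ being horizontal, that is $g^{\sharp}(\nabla^{\sharp}_{E^{\sharp}}E^{\sharp},V)=0$ for every $V$ tangent to a $G$-orbit, where $E^{\sharp}$ is the oriented unit leafwise field of $\F^{\sharp}$. Indeed, a horizontal mean curvature vector is $p^{\sharp}$-related to $\nabla_{E}E$, and the identity then holds on horizontal vectors by the submersion property and trivially on vertical ones. For the vanishing I would use that the identity component of $G$ acts by isometries of $g^{\sharp}$ preserving $\F^{\sharp}$; being connected, it preserves the oriented field $E^{\sharp}$, so every fundamental vector field $V$ of this action is a Killing field with $[E^{\sharp},V]=0$. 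Using $E^{\sharp}\perp V$ and $g^{\sharp}(E^{\sharp},E^{\sharp})\equiv 1$ one computes
\[
g^{\sharp}(\nabla^{\sharp}_{E^{\sharp}}E^{\sharp},V)=-g^{\sharp}(E^{\sharp},\nabla^{\sharp}_{E^{\sharp}}V)=-g^{\sharp}(E^{\sharp},\nabla^{\sharp}_{V}E^{\sharp})=-\tfrac12\,V\bigl(g^{\sharp}(E^{\sharp},E^{\sharp})\bigr)=0.
\]
As the remaining discrete part of $G$ contributes nothing to the vertical spaces, this proves the identity. The main obstacle is exactly this computation: the descent of a strongly tense metric works only because the leafwise unit field is invariant under the infinitesimal generators of the fibres, which is what forces the mean curvature vector to remain horizontal. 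When $\F^{1}$ is non-orientable and $M^{\sharp}\to M^{1}$ is the orientation double cover, its deck transformation reverses $E^{\sharp}$; this is harmless, being discrete, and $\kappa^{\sharp}$ is in any case insensitive to the orientation of $\F^{\sharp}$.
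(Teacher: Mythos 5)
Your proposal is correct, and its skeleton coincides with the paper's: both define $g$ by declaring the projection $p^{\sharp}\colon M^{\sharp}\to M$ to be a Riemannian submersion (the orthogonality hypothesis making $T\F^{\sharp}$ horizontal), and both reduce the lemma to the single identity $p^{\sharp*}\kappa=\kappa^{\sharp}$. Where you genuinely diverge is in the verification of that identity. The paper never touches the Levi-Civita connection: over a simply connected $U\subset M$ it fixes compatible orientations, notes that the definition of the characteristic form together with the orthogonal decomposition $TM^{\sharp}=\ker p^{\sharp}_{*}\oplus T\F^{\sharp}\oplus D$ forces $p^{\sharp*}\chi_{U}=\chi_{U^{\sharp}}$, and then Rummler's formula (pullback commutes with $d$) yields $p^{\sharp*}(\kappa|_{U})=\kappa^{\sharp}|_{U^{\sharp}}$ in one line, so basicness and closedness descend simultaneously. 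You instead argue through the mean curvature vector: the fundamental vector fields $V$ of the identity component of $G$ commute with the unit field $E^{\sharp}$ (a connected group of foliated isometries must preserve the oriented unit field), and your Koszul computation --- which, note, uses only $[E^{\sharp},V]=0$, the orthogonality, and $|E^{\sharp}|=1$, not the Killing property of $V$ you invoke --- shows that $\nabla^{\sharp}_{E^{\sharp}}E^{\sharp}$ is horizontal, after which O'Neill's lemma for basic horizontal fields of a Riemannian submersion projects it to $\nabla_{E}E$. (A small wording quibble: on vertical vectors the identity is not ``trivial'' on both sides; the right-hand side vanishes precisely because of your horizontality computation.) Your route is longer and needs the submersion machinery, but it makes the geometric mechanism explicit --- the mean curvature vector stays horizontal because the fibre directions are generated by flows preserving $E^{\sharp}$ --- and it isolates a useful general fact the paper leaves implicit: since $i_{X}\kappa=0$ holds automatically for mean curvature forms, closedness alone already implies basicness, so only $d\kappa=0$ ever needs to be transferred downstairs.
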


\begin{proof}
We have a $g^{\sharp}$-orthogonal decomposition $TM^{\sharp}=\ker\pi_*\oplus T\F^{\sharp}\oplus D$, where $D=(T\F^{\sharp})^{\perp g^{\sharp}}\cap(\ker\pi_*)^{\perp g^{\sharp}}$. As $g^{\sharp}$ is $G$-invariant and $\pi$ is a principal $G$-bundle, there exists a Riemannian metric $g$ on $M$ such that $g^{\sharp}(v,w)=g(\pi_*v,\pi_*w)$ for every $v,w\in T_x\F^{\sharp}\oplus D_x$. Notice that
\begin{equation}\label{formula:bundle decomposition}
\pi_*(T\F^{\sharp})=T\F\quad\text{and}\quad \pi_*D=(T\F)^{\perp g}\;.
\end{equation}
We now show that $g$ is strongly tense. Let $U^{\sharp}$ be an open set of $M^{\sharp}$ and $U=\pi(U^{\sharp})$. We assume that $U^{\sharp}$ and $U$ are simply connected, and hence $\F|_{U^{\sharp}}$ and $\F|_U$ are orientable. We fix compatible orientations of $\F|_{U^{\sharp}}$ and $\F|_U$. Let $\chi_{U^{\sharp}}$ and $\chi_U$ be characteristic forms. Then~\eqref{formula:def chii} and~\eqref{formula:bundle decomposition} imply that $\pi^*\chi_U=\chi_{U^{\sharp}}$. By Rummler's formula~\eqref{formula:rummler}, we get $\pi^*(\kappa|_U)=\kappa^{\sharp}|_{U^{\sharp}}$, and thus the proof is concluded.
\end{proof}

\subsection{Retracting the structure groups of $(\mathbb{T}^{k},\mathcal{F}_{v})$-bundles}\label{section:reduction}

We consider the following group:
\begin{equation*}
\GL_{v}(k;\mathbb{Z}) = \{ A \in \GL(k;\mathbb{Z}) \mid Av = \lambda v, \exists \lambda \in \mathbb{R} \}.
\end{equation*}
We have a standard injection
\begin{equation*}
\iota \colon \GL_{v}(k;\mathbb{Z})\ltimes \mathbb{T}^{k} \longrightarrow \Diff(\mathbb{T}^{k},\mathcal{F}_{v})
\end{equation*}
where $\iota(A,y)(x)=Ax + y$ where $+$ is the sum of $\mathbb{T}^k\equiv\mathbb{R}^k/\mathbb{Z}^k$. We denote the linear part $\GL_{v}(k;\mathbb{Z}) \longrightarrow \Diff(\mathbb{T}^{k},\mathcal{F}_{v})$ of $\iota$ by $\iota_{1}$.

\begin{lemma}\label{Lemma : pi 0}
The map $\pi_{0}(\iota_{1}) \colon \GL_{v}(k;\mathbb{Z}) \to \pi_{0}(\Diff(\mathbb{T}^{k},\mathcal{F}_{v}))$ induced by $\iota_{1}$ is bijective.
\end{lemma}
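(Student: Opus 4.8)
The plan is to exhibit $\pi_{0}(\iota_{1})$ as a bijection by producing an explicit two-sided inverse on $\pi_{0}$. Let $\rho\colon\Diff(\mathbb{T}^{k},\mathcal{F}_{v})\to\GL(k;\ZZ)$ be the action on $H_{1}(\mathbb{T}^{k};\ZZ)\cong\ZZ^{k}$. The first step is to check that $\rho$ lands in $\GL_{v}(k;\ZZ)$, i.e. that the homology action $A=\rho(\phi)$ of any leaf-preserving $\phi$ satisfies $Av\in\RR v$. For this I would lift $\phi$ to a $\ZZ^{k}$-equivariant diffeomorphism $\tilde\phi$ of $\RR^{k}$ with $\tilde\phi(x+n)=\tilde\phi(x)+An$, which maps lines parallel to $v$ to lines parallel to $v$. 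Choosing $t_{j}\to\infty$ and $m_{j}\in\ZZ^{k}$ with $t_{j}v-m_{j}\to0$ (possible since the leaf through $0$ is dense), equivariance gives $\tilde\phi(t_{j}v)=Am_{j}+O(1)=t_{j}Av+O(1)$, while leaf-preservation gives $\tilde\phi(t_{j}v)\in\tilde\phi(0)+\RR v$; dividing by $t_{j}$ and letting $j\to\infty$ forces $Av\in\RR v$. Since the homology action is an isotopy invariant and a homomorphism, $\rho$ descends to a group homomorphism $\bar\rho\colon\pi_{0}(\Diff(\mathbb{T}^{k},\mathcal{F}_{v}))\to\GL_{v}(k;\ZZ)$.

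Because the linear diffeomorphism $\iota_{1}(A)\colon x\mapsto Ax$ acts on $H_{1}$ exactly by $A$, we obtain $\bar\rho\circ\pi_{0}(\iota_{1})=\id$. This immediately yields injectivity of $\pi_{0}(\iota_{1})$ and surjectivity of $\bar\rho$, so the whole statement reduces to the injectivity of $\bar\rho$, that is, to showing that a leaf-preserving $\phi$ with $\rho(\phi)=\id$ is isotopic to the identity through leaf-preserving diffeomorphisms.

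To prove this I would again pass to the lift $\tilde\phi$, now with $\tilde\phi(x+n)=\tilde\phi(x)+n$, and split $\RR^{k}=\RR v\oplus H$ with $H\cong\RR^{k-1}$, writing $\tilde\phi(t,w)=(f(t,w),g(w))$; here $g$ depends on $w$ alone because $\tilde\phi$ sends each leaf $\{w=\text{const}\}$ onto a single leaf. Equivariance gives $g(w+\bar n)=g(w)+\bar n$ for the projections $\bar n\in H$ of $n\in\ZZ^{k}$, and since these projections are dense in $H$ (equivalently, the leaves of $\mathcal{F}_{v}$ are dense), the continuous map $w\mapsto g(w)-w$ is constant, say equal to $c$. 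Composing $\phi$ with a path of translations (all of which preserve $\mathcal{F}_{v}$) reduces to the case $c=0$, i.e. $g=\id$, so that $\tilde\phi(t,w)=(f(t,w),w)$. As $\phi$ is homology-trivial it is orientation-preserving, which forces $\partial_{t}f>0$; hence the leafwise straight-line homotopy $\tilde\phi_{s}(t,w)=((1-s)f(t,w)+st,\,w)$ consists of $\ZZ^{k}$-equivariant, leaf-preserving diffeomorphisms joining $\tilde\phi$ to the identity, which proves injectivity of $\bar\rho$ and hence the lemma.

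The main obstacle is precisely this last injectivity statement: one cannot simply invoke the identification of $\pi_{0}(\Diff(\mathbb{T}^{k}))$ with $\GL(k;\ZZ)$, because the isotopy must stay inside the smaller group $\Diff(\mathbb{T}^{k},\mathcal{F}_{v})$. The mechanism that makes it tractable is the density of the leaves: it rigidifies the transverse part of any homology-trivial leaf-preserving diffeomorphism into a mere translation, after which only the leafwise behaviour survives and can be straightened linearly. I would take care to verify that the straightening remains a diffeomorphism for all $s$ (guaranteed by $\partial_{t}f>0$) and descends to $\mathbb{T}^{k}$ (guaranteed by the equivariance computation).
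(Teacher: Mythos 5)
Your proof is correct, but it takes a genuinely different route from the paper's. Both arguments share the same skeleton: the homology representation (your $\bar\rho$, the paper's $\rho_{0}\colon\pi_{0}(\Diff(\TT^{k},\F_{v}))\to\Aut(H_{1}(\TT^{k};\ZZ))$) is exhibited as an inverse of $\pi_{0}(\iota_{1})$, which splits the lemma into two claims: the image of the homology representation lies in $\GL_{v}(k;\ZZ)$, and its kernel is trivial. For the first claim the paper argues cohomologically: the pairing of $H_{1}(\TT^{k};\RR)$ with the basic cohomology $H^{1}(\TT^{k}/\F_{v})$ induces a natural map $\psi$ whose kernel is exactly $\RR v$, so naturality forces $\rho_{0}(f)$ to preserve $\RR v$; you instead lift to $\RR^{k}$ and use recurrence of the dense linear orbit together with $\ZZ^{k}$-equivariance to extract $Av\in\RR v$ asymptotically --- an elementary dynamical substitute for the basic-cohomology argument. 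For the second claim the paper proves nothing itself: the triviality of the kernel is exactly what is delegated to \cite[Lemmas~II.2 and~III.2]{Molino Sergiescu} (the same input used for Proposition~\ref{Proposition : Retraction 1}), and it is precisely the point you identified as the main obstacle, namely that a homology-trivial element of $\Diff(\TT^{k},\F_{v})$ must be isotoped to the identity \emph{inside} $\Diff(\TT^{k},\F_{v})$ rather than in $\Diff(\TT^{k})$. You re-prove this from scratch: density of the projections of $\ZZ^{k}$ in $H\cong\RR^{k}/\RR v$ rigidifies the transverse part of the lift into a translation, and the leafwise straight-line isotopy (a diffeomorphism for each $s$ since $(1-s)\partial_{t}f+s>0$, where $\partial_{t}f>0$ is correctly deduced from orientation-preservation because $\det D\tilde\phi=\partial_{t}f$ once $g=\id$; and $\ZZ^{k}$-equivariant, hence descending to $\TT^{k}$) finishes. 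What each approach buys: the paper's proof is short and consistent with its systematic reliance on the Molino--Sergiescu machinery, which it needs elsewhere anyway; yours is self-contained and elementary, in effect reconstructing the cited lemma, at the cost of some bookkeeping (the splitting $\tilde\phi(t,w)=(f(t,w),g(w))$, the existence of return times $t_{j}\to\infty$, surjectivity of $t\mapsto(1-s)f(t,w)+st$), all of which is standard and checks out.
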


\begin{proof}
Consider the natural map
\[
\rho_{0} \colon \pi_{0}(\Diff(\mathbb{T}^{k},\mathcal{F}_{v})) \to \Aut(H_1(\mathbb{T}^{k};\mathbb{Z})) \cong \GL(k;\mathbb{Z})\;.
\]
Since $\rho_{0} \circ \pi_{0}(\iota_{1})$ is injective as shown in the proof of~\cite[Lemma~III.2]{Molino Sergiescu}, $\pi_{0}(\iota_{1})$ is injective. We will show that $\pi_{0}(\iota_{1})$ is surjective. It suffices to show that the image of $\rho_{0}$ is contained in $\GL_{v}(k;\mathbb{Z})$. For every $f\in\pi_{0}(\Diff(\mathbb{T}^{k},\mathcal{F}_{v}))$, we consider the following commutative diagram
\begin{equation*}
\xymatrix{ H_{1}(\mathbb{T}^{k};\mathbb{R}) \ar[r]^<<<<<{\psi} \ar[d]_{\rho_{0}(f)} & H^{1}(\mathbb{T}^{k}/\F_{v})^{*} \ar[d]^{\rho_{0}(f)^{**}} \\
 H_{1}(\mathbb{T}^{k};\mathbb{R}) \ar[r]_>>>>>{\psi} & H^{1}(\mathbb{T}^{k}/\F_{v})^{*}\;,}
\end{equation*}
where $\psi$ is the map induced from the canonical pairing $H_{1}(\mathbb{T}^{k};\mathbb{R}) \times H^{1}(\mathbb{T}^{k}/\F_{v}) \to \RR$. Since the kernel of $\psi$ is generated by $v$ and both vertical arrows are isomorphisms, it follows that $v$ is an eigenvector of $\rho_{0}(f)$.
\end{proof}

\begin{proposition}\label{Proposition : Retraction 1}
$\Diff(\mathbb{T}^{k},\mathcal{F}_{v})$ retracts to $\GL_{v}(k;\mathbb{Z})\ltimes \mathbb{T}^{k}$.
\end{proposition}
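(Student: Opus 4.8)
The plan is to combine Lemma~\ref{Lemma : pi 0}, which identifies $\pi_{0}(\Diff(\TT^{k},\F_{v}))$ with $\GL_{v}(k;\ZZ)$ via $\iota_{1}$, with an explicit deformation retraction of the identity component onto the translation subgroup $\TT^{k}$, and then to spread that retraction over all path components by left translation. All maps will be built by explicit formulas on the universal cover, so continuity in the $C^{\infty}$ topology will be immediate.

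First I would treat the identity component $\Diff_{0}:=\Diff_{0}(\TT^{k},\F_{v})$. For $f\in\Diff_{0}$ the induced map on $H_{1}(\TT^{k};\ZZ)$ is the identity, so $f$ lifts to $\tilde f\colon\RR^{k}\to\RR^{k}$ with $\tilde f(x+n)=\tilde f(x)+n$ for $n\in\ZZ^{k}$; write $\tilde f(x)=x+u(x)$ with $u\colon\TT^{k}\to\RR^{k}$ smooth. Fix a linear splitting $\RR^{k}=\RR v\oplus V$ and decompose $u=u_{\parallel}\,v+u_{\perp}$ with $u_{\parallel}\colon\TT^{k}\to\RR$ and $u_{\perp}\colon\TT^{k}\to V$. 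That $f$ preserves $\F_{v}$ means exactly $\partial_{v}\tilde f(x)\in\RR v$ for all $x$, i.e. $\partial_{v}u_{\perp}\equiv 0$; thus $u_{\perp}$ is constant along every leaf, and since the leaves of $\F_{v}$ are dense, $u_{\perp}\equiv c$ is a constant vector. This is the key structural point: foliation invariance forces the transverse part of the displacement to be constant, so $D\tilde f=I+v\,du_{\parallel}$ is a rank-one perturbation of the identity and $\det D\tilde f=1+\partial_{v}u_{\parallel}=:\rho>0$ on $\TT^{k}$.

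Next I would retract by linearly straightening the leafwise part. Set $g_{s}=(1-s)u_{\parallel}+s\,\bar u_{\parallel}$, where $\bar u_{\parallel}$ is the average of $u_{\parallel}$, and let $f_{s}$ be induced by $\tilde f_{s}(x)=x+g_{s}(x)\,v+c$. Each $\tilde f_{s}$ is $\ZZ^{k}$-equivariant and sends the line $x+\RR v$ into $\tilde f_{s}(x)+\RR v$, so $f_{s}$ preserves $\F_{v}$; moreover $\det D\tilde f_{s}=1+(1-s)\partial_{v}u_{\parallel}=s+(1-s)\rho>0$, being a convex combination of the positive numbers $1$ and $\rho$. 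Hence $f_{s}$ is a local diffeomorphism of the compact manifold $\TT^{k}$ inducing the identity on $H_{1}$; being a proper local diffeomorphism it is a one-sheeted covering, hence a diffeomorphism, and $f_{s}\in\Diff_{0}$. Since $f_{0}=f$, since $f_{1}$ is translation by $\bar u_{\parallel}\,v+c\in\TT^{k}$, and since $f_{s}=f$ whenever $f$ is already a translation, the assignment $R_{0}(f,s)=f_{s}$ is a strong deformation retraction of $\Diff_{0}$ onto $\TT^{k}$.

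Finally I would globalize using Lemma~\ref{Lemma : pi 0}: the components of $\Diff(\TT^{k},\F_{v})$ are indexed bijectively by $A\in\GL_{v}(k;\ZZ)$ through $\iota_{1}$, and every $f$ in the component of $\iota_{1}(A)$ satisfies $\iota_{1}(A)^{-1}f\in\Diff_{0}$. Transporting the retraction componentwise by $R(f,s)=\iota_{1}(A)\cdot R_{0}(\iota_{1}(A)^{-1}f,s)$ gives a deformation retraction onto $\bigcup_{A}\iota_{1}(A)\cdot\TT^{k}=\iota(\GL_{v}(k;\ZZ)\ltimes\TT^{k})$, and it is continuous because the components are open and closed. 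The only steps that require genuine care are the density argument that makes $u_{\perp}$ constant and the passage from local diffeomorphism to diffeomorphism via the covering-space argument; once these are secured, the determinant identity $\det D\tilde f_{s}=s+(1-s)\rho$ does all the remaining work, and in particular no smoothing operator is needed.
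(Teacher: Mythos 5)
Your proof is correct, and its outer skeleton is the same as the paper's: both use Lemma~\ref{Lemma : pi 0} to reduce the problem to showing that the identity component $\Diff_{0}(\TT^{k},\F_{v})$ retracts onto the translation subgroup $\TT^{k}$, and both spread that retraction over the remaining components by left translation (the paper leaves this last step implicit). Where you genuinely diverge is at the core step: the paper disposes of it in one sentence by citing Lemmas~II.2 and~III.2 of \cite{Molino Sergiescu}, whereas you prove it from scratch via the universal-cover normal form $\tilde f(x)=x+u_{\parallel}(x)\,v+c$ (foliation invariance plus density of the leaves kills $u_{\perp}$) and the straight-line homotopy $g_{s}=(1-s)u_{\parallel}+s\,\bar u_{\parallel}$, kept inside the group by the determinant identity $\det D\tilde f_{s}=s+(1-s)\rho>0$ together with the proper-local-diffeomorphism/degree-one argument. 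This is essentially the content that the cited Molino--Sergiescu lemmas encapsulate, so what your route buys is self-containedness --- and, as a by-product, your normal form re-proves the surjectivity half of Lemma~\ref{Lemma : pi 0}, since any foliated diffeomorphism acting trivially on $H_{1}$ deforms to a translation --- while the paper's route buys brevity. Two small points deserve a line each if you write this up: (a) the lift $\tilde f$ is unique only up to $\ZZ^{k}$, so you should note that replacing $\tilde f$ by $\tilde f+n$ changes $u_{\parallel}$, $\bar u_{\parallel}$ and $c$ compatibly and leaves $f_{s}$ unchanged, which gives well-definedness of $R_{0}$ and, with local continuous choices of lifts, its continuity; (b) continuity of the glued retraction $R$ needs the path components to be open, which is not automatic for this subgroup of $\Diff(\TT^{k})$, but follows from your own construction: the level sets of the locally constant map $f\mapsto f_{*}\in\Aut(H_{1}(\TT^{k};\ZZ))$ are path-connected by your normal form, hence are exactly the path components, and they are open. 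Finally, the positivity of $\rho$ uses that elements of $\Diff_{0}$ preserve orientation; alternatively it is automatic, since $\partial_{v}u_{\parallel}$ has zero mean on $\TT^{k}$ and the nowhere-vanishing function $1+\partial_{v}u_{\parallel}$ must therefore be positive.
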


\begin{proof}
By Lemma~\ref{Lemma : pi 0}, the injection $\GL_{v}(k;\mathbb{Z})\ltimes \mathbb{T}^{k} \longrightarrow \Diff(\mathbb{T}^{k},\mathcal{F}_{v})$ induces the bijection on the groups of connected components. Then it suffices to show that the identity component $\Diff_{0}(\mathbb{T}^{k},\mathcal{F}_{v})$ retracts to the identity component $\mathbb{T}^{k}$ of $\GL_{v}(k;\mathbb{Z})\ltimes \mathbb{T}^{k}$. Since $\Diff_{0}(\mathbb{T}^{k},\mathcal{F}_{v})$ acts trivially on the homology of $\mathbb{T}^{k}$, this is a direct consequence of~\cite[Lemmas~II.2 and~III.2]{Molino Sergiescu}.
\end{proof}

\subsection{Tenseness of linearly foliated torus bundles}\label{sec:44}

Let $\pi : M \to W$ be an oriented $(\mathbb{T}^{k},\F_{v})$-bundle with canonical foliation $\F$. Assume that the structure group of $\pi$ is reduced to $\GL_{v}(k;\ZZ) \ltimes \mathbb{T}^{k}$, namely, $\pi$ is associated to a $1$-cocycle $\sigma$ on $W$ valued in $\GL_{v}(k;\ZZ) \ltimes \mathbb{T}^{k}$. By the discreteness of $\GL_{v}(k;\ZZ)$, the $1$-cocycle $\sigma$ yields a homomorphism $h_{\sigma} : \pi_{1}M \to \GL_{v}(k;\ZZ)$. Here, clearly $h_{\sigma}$ is trivial if and only if $\sigma$ yields a structure of a principal $\mathbb{T}^{k}$-bundle on $\pi$. Thus, restricting $\sigma$ to a simply connected open set $U$ of $W$, we get a principal $\mathbb{T}^{k}$-action $\rho_{U}$ on $\pi^{-1}(U)$. We denote $\rho_{U}$ by $\rho_{U}(\sigma)$, since $\rho_{U}$ is determined only by $\sigma$ up to the change of coordinates on $\mathbb{T}^{k}$ by $\GL_{v}(k;\ZZ)$.

The following holds for an oriented $(\mathbb{T}^{k},\F_{v})$-bundle $\pi : M \to W$ with canonical foliation $\F$.
\begin{lemma}\label{lem:char}
A characteristic form $\chi$ of $(M,\F)$ is strongly tense if and only if there exists a reduction $\sigma$ of the structure group of $\pi$ to $\GL_{v}(k;\ZZ) \ltimes \mathbb{T}^{k}$ such that, for any simply connected subset $U$ of $W$, the characteristic form $\chi|_{\pi^{-1}(U)}$ is $\rho_{U}(\sigma)$-invariant. If $\pi$ has a structure of principal $\mathbb{T}^{k}$-bundle and $\chi$ is invariant under the principal $\TT^{k}$-action, then $\chi$ is taut after a multiplication of a positive function.
\end{lemma}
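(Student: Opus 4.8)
The plan is to study $\chi$ through the local principal $\TT^{k}$-actions $\rho_{U}(\sigma)$ attached to a reduction, using Rummler's formula~\eqref{formula:rummler}. First I fix a reduction $\sigma$ and a simply connected $U\subseteq W$, let $V_{v}$ be the fundamental vector field of $v\in\RR^{k}=\Lie(\TT^{k})$ for $\rho_{U}(\sigma)$ (so $V_{v}$ spans $T\F$ over $\pi^{-1}(U)$), and put $\phi=\chi(V_{v})>0$ and $E=V_{v}/\phi$. Since~\eqref{formula:rummler} reads $\kappa=\iota_{E}d\chi$, Cartan's formula gives the identity
\begin{equation*}
L_{V_{v}}\chi=\iota_{V_{v}}d\chi+d(\iota_{V_{v}}\chi)=\phi\kappa+d\phi=\phi\,(\kappa+d\log\phi),
\end{equation*}
which drives the whole argument. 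I also record that $\iota_{E}\kappa=0$ automatically for a one dimensional $\F$, whence $L_{E}\kappa=\iota_{E}d\kappa$; thus $\kappa$ is basic as soon as it is closed, and for a flow strong tenseness of $\chi$ is equivalent to $d\kappa=0$.

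For the implication \emph{invariant $\Rightarrow$ strongly tense} I assume $\chi|_{\pi^{-1}(U)}$ is $\rho_{U}(\sigma)$-invariant. Then $L_{V_{v}}\chi=0$, so the identity forces $\kappa=-d\log\phi$. As $\phi=\chi(V_{v})$ is $\TT^{k}$-invariant (both $\chi$ and $V_{v}$ are) we get $E(\phi)=0$, giving $\iota_{E}\kappa=0$ and $L_{E}\kappa=0$; hence $\kappa$ is basic, and $\kappa=-d\log\phi$ is exact, so closed. On overlaps $\sigma$ changes $v$ by an element of $\GL_{v}(k;\ZZ)$, which rescales $\phi$ by a locally constant factor; thus $d\log\phi$, and therefore $\kappa$, is globally well defined, and $g$ is strongly tense.

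The reverse implication is where I expect the real difficulty. Given a strongly tense $\chi$, I would invoke Proposition~\ref{Proposition : Retraction 1} to fix a reduction $\sigma$ and hence the actions $\rho_{U}(\sigma)$. Because each $\rho_{U}(\sigma)$ preserves $\F$ and the holonomy-invariant transverse metric of $(M,\F)$, averaging the bundle-like metric over $\rho_{U}(\sigma)$ produces a $\rho_{U}(\sigma)$-invariant characteristic form, which by the previous paragraph is automatically strongly tense. The main obstacle is to patch these local averages into a single reduction and to certify that the resulting invariant form carries the same tenseness data as $\chi$: here I would feed $d\kappa=0$ (equivalently the global relation $\kappa=-d\log\phi$ available after averaging) into the identity restricted to a fiber, use the density of the leaves of $\F_{v}$ in the fiber closure to propagate invariance, and match the $\GL_{v}(k;\ZZ)$-holonomy of $\sigma$ with the basic class $[\kappa]$ supplied by Theorem~\ref{thm:alv} in order to single out the correct $\sigma$.

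Finally, for the last assertion I take $\pi$ to be a principal $\TT^{k}$-bundle and $\chi$ invariant under the global principal action. Then $V_{v}$, $E$ and $\phi=\chi(V_{v})>0$ are globally defined and $\TT^{k}$-invariant, so the computation above gives $\kappa=-d\log\phi$ with $\phi$ a positive basic function. Rescaling the leafwise metric by $\phi^{-2}$ (leaving $(T\F)^{\perp}$ unchanged) replaces $\chi$ by $\chi'=\phi^{-1}\chi$, whose new unit tangent is $E'=\phi E=V_{v}$. Computing $\kappa'=\iota_{E'}d\chi'$ and using $V_{v}(\phi)=0$ yields $\kappa'=\kappa+d\log\phi=0$. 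Hence $\chi'$ is taut, i.e. $\chi$ becomes taut after multiplication by the positive function $\phi^{-1}$, as claimed.
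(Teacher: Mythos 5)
Your ``if'' direction and the closing assertion are correct and essentially the same as the paper's argument: both rest on the identity $L_{V_v}\chi=\iota_{V_v}d\chi+d(\iota_{V_v}\chi)=\phi\kappa+d\phi$, which is the paper's computation~\eqref{eq:k}, and your rescaling $\chi'=\phi^{-1}\chi$ with $\kappa'=\kappa+d\log\phi=0$ is exactly how the paper gets tautness in the principal case. The observation that closedness of $\kappa$ already forces basicness for a flow is a nice (correct) extra.

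The ``only if'' direction, however, is a genuine gap, not just a rough spot. The lemma demands that the \emph{given} $\chi$ be invariant under the local actions $\rho_U(\sigma)$ of \emph{some} reduction $\sigma$; your plan fixes an arbitrary reduction via Proposition~\ref{Proposition : Retraction 1} and then averages over its local torus actions. Averaging produces a \emph{different} invariant characteristic form, and no step of your outline transfers invariance back to $\chi$ itself. This cannot be repaired by ``matching'': for a generic reduction $\sigma$, the fundamental vector field of $\rho_U(\sigma)$ in the $v$-direction differs from the one adapted to $\chi$ by a positive function that is non-constant along the leaves, so $L_{V_v}\chi=\phi(\kappa+d\log\phi)\neq 0$ even when $\chi$ is strongly tense; invariance singles out one particular reduction, which therefore has to be \emph{constructed from} $\chi$, not chosen in advance and adjusted afterwards. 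Moreover, your proposed use of Theorem~\ref{thm:alv} to pin down $\sigma$ is circular within the paper's logic: Theorem~\ref{thm:alv} is proved there \emph{using} Lemma~\ref{lem:char}, and in any case the $\GL_{v}(k;\ZZ)$-holonomy is a discrete global invariant that cannot distinguish two reductions differing by a gauge transformation over $U$.

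The missing idea is the paper's local construction. Since $\kappa$ is basic and closed and $U$ is simply connected, one writes $\kappa|_{\pi^{-1}(U)}=dh$ with $h$ basic; then $e^{-h}\chi$ is taut by Rummler's formula~\eqref{formula:rummler}, so the vector field $X$ tangent to $\F$ normalized by $(e^{-h}\chi)(X)=1$ satisfies $L_X(e^{-h}\chi)=\iota_Xd(e^{-h}\chi)+d(1)=0$. Hence the closure of the flow of $X$ is a principal $\TT^k$-action $\rho_U$ on $\pi^{-1}(U)$ preserving $e^{-h}\chi$, and, because $e^{h}$ is constant on the fibers, preserving $\chi$ as well. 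Running this over a cover of $W$ by simply connected open sets produces the local actions, and these \emph{are} the reduction $\sigma$. Your own identity $L_{V_v}\chi=\phi(\kappa+d\log\phi)$ is precisely the right tool for this, but your proposal never turns it around to manufacture the torus action out of the strongly tense form.
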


\begin{proof}
Let $x$ be a point of $W$ and take a simply connected neighborhood $U$ of $x$ in $W$. Let $X$ be a vector field on $\pi^{-1}(U)$ tangent to $\F|_{\pi^{-1}(U)}$ such that the closure of the flow generated by $X$ is equal to a principal $\TT^{k}$-action on $\pi^{-1}(U)$. The ``if'' part follows from the following local computation. By Rummler's formula~\eqref{formula:rummler}, we get
\begin{equation}\label{eq:k}
\kappa = \iota_{\frac{1}{\chi(X)}X}d\chi = \frac{1}{\chi(X)} \iota_{X}d\chi = -\frac{1}{\chi(X)} d(\chi(X)) = -d \log |\chi(X)|\;.
\end{equation}
Since $\log |\chi(X)|$ is a basic function and each $\pi^{-1}(U)$ is saturated, $\kappa$ is basic and closed in $M$.

We show the ``only if'' part. Assume that $\chi$ is strongly tense. Since $\kappa = \iota_{\frac{1}{\chi(X)}X}d\chi$ is basic and closed, there exists a basic function $h$ on $(\pi^{-1}(U),\F|_{\pi^{-1}(U)})$ such that $dh = \kappa$. The Rummler's formula implies that the mean curvature form of a characteristic form $e^{-h}\chi$ is zero. Thus the closure of the flow generated by the vector field $X$ tangent to $\F$ such that $e^{-h}\chi(X) = 1$ is a principal $\mathbb{T}^{k}$-action $\rho_{U}$ which preserves $e^{-h}\chi$. By covering $W$ with simply connected open subsets $\{U_{i}\}$, these $\rho_{U_{i}}$ yield a reduction $\sigma$ of the structure group of $\pi$ into $\GL_{v}(k;\ZZ) \ltimes \mathbb{T}^{k}$ such that $\rho_{U_{i}}(\sigma) = \rho_{U_{i}}$. Since $e^{-h}$ is basic, $\rho_{U_{i}}(\sigma)$ preserves $\chi$.

The latter part of the statement follows, because~\eqref{eq:k} implies that the mean curvature form of $e^{-\chi(X)} \chi$ is zero.
\end{proof}

\subsection{Molino's commuting sheaf of linearly foliated torus bundles}\label{sec:mol}

Let $\pi : M \to W$ be a $(\mathbb{T}^{k},\F_{v})$-bundle with canonical foliation $\F$ whose structure group is a subgroup of $\GL_{v}(k;\ZZ) \ltimes \mathbb{T}^{k}$. The Molino's commuting sheaf $\mathcal{C}$ of $(M,\F)$ is determined by the structure group as follows (we refer to~\cite[Section 5.3]{Molino} for the definition of the Molino's commuting sheaf). The following proposition is a direct consequence of {\cite[Proposition~6]{Nozawa}}:

\begin{proposition}\label{prop:hol}
The holonomy homomorphism $\hol(\mathcal{C})$ of $\mathcal{C}$ is determined by
\begin{equation*}
\xymatrix{ \pi_{1}M \ar[r]  & \GL_{v}(k;\mathbb{Z}) \ar[r]^<<<<<{r} & \GL(k-1;\RR)\;,}
\end{equation*}
where the first arrow is the holonomy homomorphism $h_{\sigma}$ of $\pi$ described in the first paragraph of the last section and the second arrow $r$ is defined by sending $A\in \GL_{v}(k;\mathbb{Z})$ to the map $\RR^{k}/\RR v \to \RR^{k}/\RR v$ induced by $A$.
\end{proposition}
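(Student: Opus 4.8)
The plan is to reduce the statement to an explicit identification of Molino's commuting sheaf for the model fiber $(\mathbb{T}^k,\F_v)$, and then to track how the structure cocycle $\sigma$ twists that identification over $W$. First I would recall from~\cite[Section~5.3]{Molino} that the commuting sheaf $\mathcal{C}$ of a Lie foliation is the sheaf of (germs of) transverse vector fields commuting with all foliated vector fields; for the dense linear flow $\F_v$ on a single torus $\mathbb{T}^k$, the transverse Lie algebra is abelian of dimension $k-1$, and its underlying vector space is canonically $\RR^k/\RR v$. This is exactly the computation recorded in~\cite[Proposition~6]{Nozawa}, which I would invoke: for the trivial $(\mathbb{T}^k,\F_v)$-bundle the commuting sheaf is the constant sheaf with stalk $\RR^k/\RR v$.

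Next I would analyze how a transition map $\iota(A,y)\in\GL_v(k;\ZZ)\ltimes\mathbb{T}^k$ acts on this stalk. The translation part $y\in\mathbb{T}^k$ acts trivially on transverse vector fields (translations commute with the flow and preserve the transverse parallelism), so only the linear part $A\in\GL_v(k;\ZZ)$ contributes. Since $Av=\lambda v$ for some $\lambda\in\RR$, the automorphism $A$ of $\RR^k$ descends to a well-defined automorphism of the quotient $\RR^k/\RR v$; this descended map is precisely $r(A)\in\GL(k-1;\RR)$ as defined in the statement. Thus the effect of a transition function on the local identifications of $\mathcal{C}$ with $\RR^k/\RR v$ is given by $r(A)$, and nothing else.

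I would then assemble these local data globally. The cocycle $\sigma$ on $W$ valued in $\GL_v(k;\ZZ)\ltimes\mathbb{T}^k$ defines, via its linear part, the homomorphism $h_\sigma:\pi_1 M\to\GL_v(k;\ZZ)$ described in the first paragraph of Section~\ref{sec:44}; the holonomy of the locally constant sheaf $\mathcal{C}$ around a loop $\gamma$ is obtained by composing the transition automorphisms encountered along $\gamma$, which by the previous paragraph is exactly $r\bigl(h_\sigma([\gamma])\bigr)$. Hence $\hol(\mathcal{C})$ factors as the composite $\pi_1 M \xrightarrow{h_\sigma} \GL_v(k;\ZZ)\xrightarrow{r}\GL(k-1;\RR)$, as claimed.

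The step I expect to be the main obstacle is the second one: verifying cleanly that the translation part acts trivially and that the linear part acts through the quotient map $r$, i.e.\ making precise the identification of the stalk of $\mathcal{C}$ with $\RR^k/\RR v$ in a way that is natural under the $\GL_v(k;\ZZ)\ltimes\mathbb{T}^k$-action. This is where the foliation-theoretic content lives, but it is entirely subsumed by~\cite[Proposition~6]{Nozawa}; once that result is cited, the proposition follows as a formal consequence, and indeed the paper presents it as ``a direct consequence'' of that reference.
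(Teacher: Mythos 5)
Your proposal is correct and takes essentially the same approach as the paper: the paper gives no argument beyond declaring the proposition ``a direct consequence of~\cite[Proposition~6]{Nozawa}'', which is exactly the reduction you make. The extra bookkeeping you supply --- triviality of the translation part on the stalk, descent of $A$ to $\RR^{k}/\RR v$ via $r$, and composition of the transition automorphisms along loops to recover $r\circ h_{\sigma}$ --- is just the routine unwinding that the paper leaves implicit.
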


We will prove Theorem~\ref{thm:alv} by using Proposition~\ref{prop:hol}.
\begin{proof}[Proof of Theorem~\ref{thm:alv}]
By Theorem~\ref{Theorem: dichotomy}, it suffices to prove the theorem in the case where the closure of each leaf is compact. Let $M^{1} \to W$ be the oriented linearly foliated torus bundle in Theorem~\ref{Theorem: Molino}. If we have a strongly tense metric on $(M,\F)$ with mean curvature form $\kappa$, then we can construct a strongly tense characteristic form on $(M^{1},\F^{1})$ with mean curvature form $\kappa^{1}$ such that $\kappa^{1} = \pi^{*}\kappa$. Thus it suffices to prove the theorem in the case where $(M,\F)$ is an oriented linearly foliated torus bundle. Let $\gamma : \mathbb{S}^{1} \to W$ be a loop in $W$. The pull-back of the oriented linearly foliated torus bundle to $\mathbb{S}^{1}$ is a mapping torus $N=\mathbb{T}^{k} \times [0,1]/(A(x),0) \sim (x,1)$ for some $A \in \GL_{v}(k;\ZZ)$. By Lemma~\ref{lem:char}, the restriction of $\chi$ to $\mathbb{T}^{k} \times \{t\}$ is linear with respect to the standard coordinate on $\TT^{k}$. By~\cite[Example~7.3]{Nozawa}, the class $[\kappa]$ is determined by $[\kappa]|_{\mathbb{T}^{k} \times \{t\}}=0$ and $\int_{\mathbb{S}^{1}} \kappa = \log \lambda$, where $\lambda$ is the eigenvalue of $A$ with respect to $v$. Thus $[\kappa]$ is determined by $(M,\F)$. The latter part follows from Proposition~\ref{prop:hol}.
\end{proof}

Let $\det \mathcal{C}$ be the determinant line bundle of $\mathcal{C}$. Since we are assuming that the linear flow $\F_{v}$ tangent to $v$ is dense in $\mathbb{T}^{k}$, there exists no non-trivial $\ZZ$-linear relation on the entries of $v$. It follows that the composite of
\[
\xymatrix{ \GL_{v}(k;\mathbb{Z}) \ar[r]^<<<<<{r} & \GL(k-1;\RR) \ar[r]^<<<<<{\det} & \GL(1;\RR)\;,}
\]
is injective. We get the following consequence of Proposition~\ref{prop:hol}, which is necessary in the proof of Corollary~\ref{corollary:tautness characterization}.
\begin{corollary}\label{cor:C}
The image of the holonomy homomorphism of $\det \mathcal{C}$ is contained in $\{\pm 1 \}$ if and only if so is the image of the holonomy homomorphism of $\mathcal{C}$.
\end{corollary}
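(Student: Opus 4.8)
The plan is to funnel both sides through the single homomorphism $h_\sigma \colon \pi_1 M \to \GL_v(k;\ZZ)$ that appears in Proposition~\ref{prop:hol}. Write $H := h_\sigma(\pi_1 M)$. By that proposition $\hol(\mathcal{C}) = r \circ h_\sigma$, and hence $\hol(\det\mathcal{C}) = \det \circ\, r \circ h_\sigma$, so the images of the two holonomy homomorphisms are $r(H) \subseteq \GL(k-1;\RR)$ and $\det(r(H)) \subseteq \RR^\times$ respectively. With this, the statement reduces to the group-theoretic equivalence
\[
\det(r(H)) \subseteq \{\pm 1\} \iff r(H) \subseteq \{\pm I\},
\]
where $\pm I$ denote the scalar matrices; the left inclusion is the $\{\pm 1\}$-condition on $\det\mathcal{C}$ and the right one is the $\{\pm 1\}$-condition on $\mathcal{C}$.

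The implication ``$\Leftarrow$'' is immediate, since $\det(\pm I) = (\pm 1)^{k-1} \in \{\pm 1\}$. For ``$\Rightarrow$'' I would reduce to a single element $A \in \GL_v(k;\ZZ)$ and establish the key fact that $\det r(A) \in \{\pm 1\}$ forces $r(A) = \pm I$. Choosing a basis adapted to the flag $0 \subset \RR v \subset \RR^k$ (legitimate because $Av = \lambda_A v$ makes $\RR v$ invariant) puts $A$ in block upper-triangular form with diagonal blocks the eigenvalue $\lambda_A$ and $r(A)$, so that $\det A = \lambda_A \det r(A)$. As $A \in \GL(k;\ZZ)$ has $\det A = \pm 1$, the hypothesis $\det r(A) \in \{\pm 1\}$ is equivalent to $\lambda_A \in \{\pm 1\}$.

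It remains to see that $\lambda_A \in \{\pm 1\}$ pins $A$ down to $\pm I$, and this is the one place where the density of $\F_v$ enters — the crux of the argument. Density means there is no nontrivial $\ZZ$-linear relation among the entries of $v$, which makes the eigenvalue map $A \mapsto \lambda_A$ injective on $\GL_v(k;\ZZ)$: if $\lambda_A = \lambda_{A'}$ then $(A - A')v = 0$ with $A - A'$ integral, and the absence of relations forces every row of $A - A'$ to vanish, i.e.\ $A = A'$. Since $\lambda_I = 1$ and $\lambda_{-I} = -1$, injectivity yields $A \in \{\pm I\}$ and therefore $r(A) = \pm I$; applying this to each $A \in H$ gives $r(H) \subseteq \{\pm I\}$, which is exactly the content packaged in the observation that $\det \circ\, r$ is injective. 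The main obstacle is genuinely this rigidity step: a priori an integer automorphism could act on $\RR^k/\RR v$ with unit determinant without being scalar, and it is only the irrationality carried by a dense $\F_v$ that rules such an element out.
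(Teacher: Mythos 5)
Your proof is correct and follows essentially the same route as the paper: both funnel the two holonomy homomorphisms through $h_{\sigma}$ via Proposition~\ref{prop:hol}, and both exploit the density of $\F_{v}$ --- i.e.\ the absence of nontrivial $\ZZ$-linear relations among the entries of $v$ --- to rigidify $\GL_{v}(k;\ZZ)$. The one difference is how the rigidity is packaged: the paper asserts that the composite $\det \circ r \colon \GL_{v}(k;\mathbb{Z}) \to \GL(1;\RR)$ is injective and lets the corollary follow, whereas you prove injectivity of the eigenvalue map $A \mapsto \lambda_{A}$ and deduce from it that $(\det \circ r)^{-1}(\{\pm 1\}) \subseteq \{\pm I\}$. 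Your packaging is in fact the more accurate one: the paper's injectivity claim fails literally when $k$ is odd, since $-I \in \GL_{v}(k;\ZZ)$ and $\det r(-I) = (-1)^{k-1} = 1 = \det r(I)$; what is true, and what your argument establishes, is that the kernel of $\det \circ r$ is contained in $\{\pm I\}$, which is exactly what the corollary needs. So your proof matches the paper's strategy while quietly repairing a small inaccuracy in its stated key lemma.
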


\begin{proof}[Proof of Corollary~\ref{corollary:tautness characterization}]
The equivalence of the first three assertions is a formal consequence of Theorem~\ref{corollary:tautness characterization} and Corollary~\ref{corollary:duality} (see the proof of~\cite[Theorems~3.3 and~3.5]{royo-saralegi-wolak2}). We will show the equivalence of (iii) and (iv). If $(M,\F)$ is an $\RR$-bundle, then both (iii) and (iv) are true. By Theorem~\ref{Theorem: dichotomy}, we can assume that the closure of each leaf of $(M,\F)$ is compact. The equivalence of (iii) and (iv) follows from ~\eqref{eq:tpdS}. Finally, since the holonomy homomorphisms of Sergiescu's orientation sheaf $\mathcal{P}$ and the determinant line bundle of Molino's commuting sheaf are equal up to sign by definition of $\mathcal{P}$, the equivalence of (iv) and (v) follows from Corollary~\ref{cor:C}.
\end{proof}

The Molino's commuting sheaf and the \'{A}lvarez class are illustrated with Carri\`{e}re's example~\cite{Carriere}.
\begin{example}
Take $A\in\SL(2;\mathbb{Z})$ such that $\tr A > 2$, let $\lambda$ be one of its eigenvalues and denote by $v=(a,b)\in\RR^2$ the corresponding eigenvector. Notice that $A$ induces a diffeomorphism $\overline{A}$ on $\TT^2=\RR^2/\ZZ^2$. Consider the manifold $\TT^3_A=(\TT^2\times[0,1])/(\overline{A}x,0)\sim(x,1)$, which is a $\TT^{2}$-bundle over $\mathbb{S}^1$. Here $\TT^3_A$ admits the structure of a $(\TT^{2},\F_{v})$-bundle whose structure group is the infinite cyclic subgroup of $\SL_v(2;\mathbb{Z})$ generated by $A$. Let $\F$ be the canonical foliation. Taking the standard coordinates $(x,y,t)$ on $\TT^3_A$, we have the parallelism
\begin{equation*}
X= \lambda^{t}(a\partial/\partial{x} + b\partial/\partial{y})\;,\quad
Y=\lambda^{-t}(-b\partial/\partial{x} + a\partial/\partial{y})\;,\quad
T=\partial/\partial{t}\;.
\end{equation*}
Let $g$ be the Riemannian metric on $\TT^3_A$ such that $\{X,Y,T\}$ is an orthonormal parallelism. It is straightforward to check that $g$ is bundle-like and that its mean curvature form $\kappa$ is given by $(\log\lambda )dt$ in the standard coordinates. So, $g$ is strongly tense, while the \'Alvarez class $[\kappa]$ of $\F$ is not trivial. Thus $\F$ is not taut.
\end{example}

\section{Invariant tense metrics on linearly foliated torus bundles}\label{sec:5}

Let $\pi : M \to W$ be a $(\mathbb{T}^{k}, \mathcal{F}_{v})$-bundle and $G$ a compact Lie group acting on $M$ preserving the canonical foliation $\mathcal{F}$. We assume that $\F$ is oriented. In this section, we will prove the following result, which completes the proof of Theorem~\ref{Theorem : Strong tenseness} (see Remark~\ref{remark:proof of 1.2}):
\begin{theorem}\label{thm:tense2}
$(M,\F)$ admits a $G$-invariant strongly tense metric. Moreover, if the $G$-action is locally free and $\F$ is not tangent to any $G$-orbit, then we can take a $G$-invariant strongly tense metric so that the $G$-orbits are orthogonal to $\F$.
\end{theorem}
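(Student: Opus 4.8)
The plan is to construct the metric through its \emph{characteristic data}, namely a complement $H$ to the line field $T\F$ together with a metric along the leaves, and then to invoke Lemma~\ref{lem:char}. The starting point is the remark following Rummler's formula~\eqref{formula:rummler} that the mean curvature form $\kappa$ depends only on this pair; moreover the complements to $T\F$ form an affine space and the leafwise metrics form a convex cone. These two convexity properties are what will let me both glue local models over $W$ and average over $G$. Working with the pair $(H,\text{leafwise metric})$ rather than with the characteristic form $\chi$ itself is essential here, since $G$ is only assumed to preserve $\F$ and may reverse its orientation, so that $g^{*}\chi$ need not be a characteristic form for $g\in G$; by contrast $H$ and the leafwise metric are insensitive to the orientation.

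First I would reduce the structure group of $\pi$ to $\Gamma:=\GL_{v}(k;\ZZ)\ltimes\mathbb{T}^{k}$ by Proposition~\ref{Proposition : Retraction 1}, arranging the reduction $\sigma$ to be $G$-equivariant. The bundle over $W$ whose sections are such reductions has fibre $\Diff(\mathbb{T}^{k},\F_{v})/\Gamma$, which is contractible by Proposition~\ref{Proposition : Retraction 1}; as $G$ is compact and acts on this bundle covering its action on $W$, a $G$-equivariant section exists by a standard averaging argument over the contractible fibre. Such a $\sigma$ produces, on each simply connected $U\subset W$, a principal torus action $\rho_{U}(\sigma)$ along the fibres which preserves $\F$ and its orientation, and which every $g\in G$ now carries to $\rho_{g(U)}(\sigma)$. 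Recall that $\rho_{U}(\sigma)$-invariance of a tensor on $M$ is intrinsic, independent of the chosen $U$: on overlaps the local torus actions coincide as subgroups of $\Diff(\pi^{-1}(U))$, because $\mathbb{T}^{k}$ is normal in $\Gamma$ and $\GL_{v}(k;\ZZ)$ acts on $\mathbb{T}^{k}$ by automorphisms.

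Next I would build $\rho(\sigma)$-invariant characteristic data. Over a trivialising $U$ the reduction identifies $\pi^{-1}(U)\cong U\times\mathbb{T}^{k}$, with $\rho_{U}(\sigma)$ acting by fibrewise translation, and I take the fibrewise-translation-invariant complement and flat leafwise metric coming from the standard structure of $\mathbb{T}^{k}$. Choosing a partition of unity $\{\phi_{i}\}$ on $W$ subordinate to such a cover and forming the affine, respectively convex, combinations with weights $\phi_{i}\circ\pi$ yields a global complement and leafwise metric that are still $\rho(\sigma)$-invariant, the weights being constant along the fibres. Averaging this pair over $G$ then produces a $G$-invariant and $\rho(\sigma)$-invariant pair $(H,\text{leafwise metric})$; invariance survives because the reduction is $G$-equivariant, and convexity keeps us within complements and metrics. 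This pair determines a characteristic form that is $\rho_{U}(\sigma)$-invariant on every $\pi^{-1}(U)$, so Lemma~\ref{lem:char} shows that the associated metric, completed by any $G$-invariant holonomy invariant transverse metric on $H$, is strongly tense and $G$-invariant. For the ``moreover'' part I would in addition require that $H$ contain the orbit distribution $T(G\cdot x)$, which is $G$-invariant and, by local freeness and the non-tangency of $\F$ to the orbits, is transverse to $T\F$ and meets it trivially; using these hypotheses together with the $G$-equivariance of $\sigma$ one selects the local complements to be $\rho(\sigma)$-invariant and to contain the orbit directions, and then combines and averages them as before, so that the final $H=(T\F)^{\perp}$ contains $T(G\cdot x)$, i.e.\ the $G$-orbits are orthogonal to $\F$.

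The step I expect to be the main obstacle is the $G$-equivariant reduction. A priori $G$ acts on the fibres by diffeomorphisms that preserve only $\F_{v}$, not by affine maps, so the torus actions furnished by Proposition~\ref{Proposition : Retraction 1} need not be $G$-invariant; it is the compactness of $G$ together with the contractibility of $\Diff(\mathbb{T}^{k},\F_{v})/\Gamma$ that makes an invariant choice possible. The delicate point is to check that averaging a reduction over $G$ genuinely lands back inside the reduced structure group $\Gamma$ --- equivalently, that the averaged local actions are again principal torus actions compatible with $\F_{v}$ --- rather than merely in $\Diff(\mathbb{T}^{k},\F_{v})$. Once this is secured, the remaining gluing and averaging are routine, thanks to the convexity of the characteristic data.
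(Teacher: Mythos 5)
Your proof stands or falls with the existence of a $G$-\emph{equivariant} reduction of the structure group to $\GL_{v}(k;\ZZ)\ltimes\TT^{k}$, and the justification you offer for it --- ``a standard averaging argument over the contractible fibre'' --- is not an argument. Contractibility of the fibre $\Diff(\TT^{k},\F_{v})/(\GL_{v}(k;\ZZ)\ltimes\TT^{k})$ yields \emph{non-equivariant} sections by obstruction theory, but it gives no way to average: averaging points of a fibre requires a $G$-invariant convex or affine structure on that fibre, and this coset space carries none in evidence (contrast with $\GL(n;\RR)/\OO(n)$, where reduction-by-averaging does work precisely because of the canonical nonpositively curved convex structure; here the discrete factor $\GL_{v}(k;\ZZ)$ is noncompact and no naive average can land back in the reduced group). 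The equivariant-obstruction-theory alternative would require that, for each isotropy subgroup $H\subset G$, the $H$-fixed-point set of the fibre be nonempty and contractible; but a fixed point of $H$ in that coset space \emph{is} a torus action compatible with $\F_{v}$ and normalized by $H$ --- which is essentially the statement you are trying to prove, not a tool you may invoke. You flag this yourself as ``the delicate point,'' but you never resolve it, and since every subsequent step (invariant local models, partition of unity, averaging the pair, Lemma~\ref{lem:char}) is conditioned on this equivariant $\sigma$, the proof is incomplete at its load-bearing step. The remainder of your scheme (convexity of the characteristic data, gluing, and the appeal to Lemma~\ref{lem:char}) is sound, but it is downstream of the gap.

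For comparison, the paper never attempts to equivariantly reduce the structure group on $M$ directly; it manufactures the equivariant system of torus actions \emph{dynamically}. One passes to the covering $M'\to M$ associated to $\ker\phi_{M}$, on which the bundle becomes a principal $\TT^{k}$-bundle; the $G$-action lifts to $M'$ because $\phi_{M}$ vanishes on $G$-orbits (Lemma~\ref{Lemma: holonomy} --- proved by averaging a fibre metric over the compact group $G$, i.e.\ averaging is used where a convex structure actually exists). On $M'$ one takes a taut torus-invariant characteristic form, averages it over $G$ with signs (Lemma~\ref{lemma:chi1}; tautness, unlike a structure-group reduction, is convex), and the closure of the flow of the dual vector field $X_{1}$ is a new principal $\TT^{k}$-action that is $G$-compatible by construction and satisfies $h_{*}X_{1}=\phi_{M}(h)X_{1}$ for deck transformations $h$, so that $G$, $\Gamma$ and $\TT^{k}$ assemble into a $(G\times\Gamma)\ltimes\TT^{k}$-action; averaging a $\Gamma$-invariant characteristic form over $G\ltimes\TT^{k}$ then produces, via Lemma~\ref{lem:char}, the invariant strongly tense form that descends to $M$. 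That flow-closure construction on the covering is exactly the ingredient your proposal is missing; if you supply it (or an honest equivariant section theorem for this specific coset bundle), your outline can be completed.
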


By Proposition~\ref{Proposition : Retraction 1}, we can assume that the structure group of $\pi$ is $\GL_{v}(k;\mathbb{Z})\ltimes \mathbb{T}^{k}$. Let $\phi_{M}$ be the composite of
\begin{equation}\label{eq:compM}
\xymatrix{ \pi_{1}M \ar[r] & \GL_{v}(k;\mathbb{Z}) \ar[r] & \mathbb{R}\;, }
\end{equation}
where the first arrow is  the holonomy homomorphism $h_{\sigma}$ of $\pi$ described in the first paragraph of Section~\ref{sec:44} and the second arrow maps $A\in \GL_{v}(k;\mathbb{Z})$ to the logarithm of the eigenvalue of $A$ with respect to $v$.

\begin{lemma}\label{Lemma: holonomy}
Let $K$ be a $G$-orbit in $M$ and $i_{K} : \pi_{1}K \to \pi_{1}M$ be the map induced by the inclusion. Then, $\phi_{M} \circ i_{K}$ is trivial.
\end{lemma}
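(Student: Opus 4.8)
The plan is to use that $\phi_M$ factors as the composite $\pi_1 M \xrightarrow{h_\sigma}\GL_{v}(k;\ZZ)\xrightarrow{\log|\lambda|}\RR$, where $\log|\lambda|$ sends $A$ to the logarithm of the modulus of its eigenvalue along $v$; since $\lambda(AB)=\lambda(A)\lambda(B)$, this is a group homomorphism into $(\RR,+)$. As $\RR$ is torsion-free abelian, $\phi_M\circ i_K$ factors through $H_1(K;\ZZ)$, and because $\pi_1(K,x_0)$ sees only the path-component $K^{0}=G^{0}\cdot x_0$ of the orbit (with $G^{0}$ the identity component of $G$), it suffices to kill $\phi_M$ on the generators of $\pi_1 K^{0}$. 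Using the fibration $H\to G^{0}\to K^{0}$ with $H=G^{0}\cap G_{x_0}$, these generators are represented by loops $\gamma=\alpha\cdot x_0$, where $\alpha$ is a path in $G^{0}$ from $e$ to an element $h\in H$ (and a genuine loop in $G^{0}$ when $h=e$).

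First I would average a bundle-like metric over the compact group $G$ to obtain a $G$-invariant bundle-like metric $g$; since $G$ preserves $\F$, the induced transverse metric remains holonomy invariant. By Proposition~\ref{Proposition : Retraction 1} the structure group of $\pi$ may be taken to be $\GL_{v}(k;\ZZ)\ltimes\TT^{k}$, so each fibre carries a flat Euclidean structure, and the monodromy of $\gamma$ is the element $A=h_\sigma([\gamma])\in\GL_{v}(k;\ZZ)$. The key point is that the $G$-action provides a \emph{second}, isometric, identification of the fibres along $\gamma$: the family $t\mapsto\alpha(t)$ carries the fibre over $x_0$ isometrically onto the fibre over $\gamma(t)$, and since $\alpha(1)=h$ acts as an isometry of the fibre over $x_0$, this identification closes up to an isometry of the flat torus. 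Comparing it with the flat affine $\GL_{v}(k;\ZZ)\ltimes\TT^{k}$-connection then shows that $A$ preserves a Euclidean metric on $\TT^{k}$, hence is conjugate over $\RR$ to an element of $\OO(k)$.

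Consequently all eigenvalues of $A$, and in particular the eigenvalue $\lambda$ along $v$, have modulus one, so $\log|\lambda|=0$ and $\phi_M([\gamma])=0$, which proves the claim. Equivalently, one may average $\phi_M$ to a $G$-invariant closed representative $\omega$ and use Cartan's formula $L_{\xi^{*}}\omega=d(\iota_{\xi^{*}}\omega)=0$ to see that $\iota_{\xi^{*}}\omega$ is a constant $c(\xi)$ with the orbit period equal to this constant; the relation $|\lambda|=1$ then forces $c$ to vanish on the centre of $\mathfrak{g}$, which is the only part that could contribute. The main obstacle is the rigorous comparison of the two trivializations of $\pi$ along the orbit: one must verify that the $G$-equivariant isometric identification of the fibres and the flat affine structure of the $\GL_{v}(k;\ZZ)\ltimes\TT^{k}$-reduction differ by an element of $\OO(k)$, so that $A$ is genuinely Euclidean rather than merely volume preserving; checking that the averaged metric stays bundle-like is a secondary but necessary technical point.
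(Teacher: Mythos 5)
Your overall strategy is sound and is in fact close in spirit to the paper's: average over the compact group $G$ so that loops in an orbit are realized by isometries, then deduce that the relevant eigenvalue has modulus one. But the central step, as you wrote it, has a genuine gap. You claim that the $G$-transport along $\gamma$ ``closes up to an isometry of the flat torus'' and that comparing it with ``the flat affine $\GL_{v}(k;\ZZ)\ltimes\TT^{k}$-connection'' shows that $A$ preserves a Euclidean metric on $\TT^{k}$. Two things are wrong here. First, the fibre of $\pi$ with the restriction of the averaged metric is \emph{not} a flat torus: averaging destroys flatness, so $h=\alpha(1)$ is an isometry of some Riemannian metric on $\TT^{k}$, not of a Euclidean one, and no direct comparison of metrics on the fibre can be run. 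Second, the reduction to $\GL_{v}(k;\ZZ)\ltimes\TT^{k}$ does not give a flat connection on the fibration, because only the linear parts of the cocycle are locally constant (the $\TT^{k}$-components vary smoothly); what is canonically flat is the local system formed by the $H_{1}$ of the fibres (equivalently, in the paper's language, the bundle $E=\ker(\pi_{*}\colon\nu\F\to TW)$), not the bundle itself. You yourself flag this comparison as ``the main obstacle,'' and it is exactly the point left unproven. The ``equivalently'' paragraph via Cartan's formula does not rescue it: there you invoke $|\lambda|=1$, which is the statement being proved, to kill the constants $c(\xi)$, so that alternative is circular.

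The gap is reparable, and the repair shows your route can be made to work at the level of homology rather than of metrics on the fibre. The family $\alpha(t)$ trivializes the pulled-back fibration $\gamma^{*}\pi$ over $[0,1]$, so the geometric monodromy of $\pi$ along $\pi\circ\gamma$ is isotopic to $h|_{F}$, where $F$ is the (compact) fibre through $x_{0}$; hence $A=h_{\sigma}([\gamma])$ is conjugate in $\GL(k;\RR)$ to the action of $(h|_{F})_{*}$ on $H_{1}(F;\RR)$. Now $h|_{F}$ is an isometry of the compact Riemannian manifold $(F,g|_{F})$, so the closure of $\{(h|_{F})^{n}\}$ in $\operatorname{Isom}(F,g|_{F})$ is a compact group; its continuous linear action on $H_{1}(F;\RR)$ therefore preserves a positive definite inner product (alternatively, isometries preserve harmonic $1$-forms and their $L^{2}$ inner product). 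Thus $A$ is conjugate into $\OO(k)$, all its eigenvalues have modulus one, and $\phi_{M}([\gamma])=\log|\lambda|=0$. Note that bundle-likeness of the averaged metric, which you list as a necessary check, plays no role anywhere in this argument. For comparison, the paper runs the same compactness argument one level down and avoids the fibres altogether: it averages a metric on the rank-$(k-1)$ subbundle $E=\ker(\pi_{*}\colon\nu\F\to TW)$, observes that the holonomy $r(A)$ along $\pi_{*}\gamma$ then preserves a metric on $E$, and concludes since $|\det r(A)|=1$ together with $|\det A|=1$ forces $|\lambda|=1$; this sidesteps both of the issues above.
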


\begin{proof}
Let $E$ be the vector subbundle of $\nu \F$ defined by the kernel of $\pi_{*} : \nu \F \to TW$. Here $E$ is invariant under the $G$-action, because $G$ preserves $\F$ and the fibers of $\pi$ are closures of the leaves of $\F$. Then $E$ has a $G$-invariant metric by compactness of $G$. Thus, for any loop $\gamma$ in $K$, the holonomy map associated to $\pi_{*}\gamma$ preserves a metric on $E$. This implies the triviality of $\phi_{M} \circ i_{K}$.
\end{proof}

The key of our proof of Theorem~\ref{thm:tense2} is the following lemma, which was already used in~\cite[Lemma~6.3]{Alvarez Lopez}.
\begin{lemma}\label{lemma:chi1}
Let $\chi$ be a characteristic form of $(M,\F)$. Let $\chi_{1}$ be defined by
$$
\chi_{1} = \int_{g \in G} \epsilon(g) (g^{*} \chi) dg\;,
$$
where $dg$ is a Haar measure of $G$ and $\epsilon(g) = 1$ if $g$ preserves the orientation of $\F$ and $\epsilon(g) = -1$ otherwise. Then $\chi_{1}$ is a characteristic form of $\F$. Moreover, if $\chi$ is taut, then so is $\chi_{1}$.
\end{lemma}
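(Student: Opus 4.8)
The plan is to reduce both assertions to pointwise statements that are available precisely because $\F$ has dimension one. Since $T\F$ is an oriented line bundle, I would first fix a nowhere-vanishing, positively oriented section $V$ of $T\F$, and record two characterizations that follow directly from the definitions. By~\eqref{formula:def chii}, a $1$-form is the characteristic form of some Riemannian metric if and only if it is positive on $V$: if $E_1=V/|V|$ is the unit positive generator then $\chi(V)=|V|\,g(E_1,E_1)>0$, and conversely any $1$-form $\chi$ with $\chi(V)>0$ is the characteristic form of the metric in which $\ker\chi$ is declared orthogonal to $T\F$ and the generator is normalized by $|V|_g=\chi(V)$. By Rummler's formula~\eqref{formula:rummler}, the mean curvature form vanishes if and only if $\iota_V d\chi=0$; this tautness criterion depends on $\chi$ alone, not on the auxiliary metric. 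Dimension one enters here crucially, because the average of $1$-forms is again a $1$-form, whereas for higher-dimensional leaves the average of characteristic (decomposable) $p$-forms need not be decomposable.

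For the first assertion I would evaluate $\chi_1$ on $V$. For each $g\in G$ one has $(g^*\chi)(V)=\chi(g_*V)$, and since $G$ preserves $\F$ the vector $g_*V$ lies again in the oriented line bundle $T\F$. If $g$ preserves the orientation of $\F$ then $g_*V$ is positively oriented and $\chi(g_*V)>0$; if $g$ reverses it then $\chi(g_*V)<0$. In either case the definition of $\epsilon(g)$ gives $\epsilon(g)\,(g^*\chi)(V)>0$ at every point. Since the sign $\epsilon$ is locally constant on $G$ and this integrand is strictly positive and continuous, integrating against the finite Haar measure yields $\chi_1(V)>0$, so $\chi_1$ is a characteristic form. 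The twist by $\epsilon$ is exactly what prevents the orientation-reversing elements from cancelling the orientation-preserving ones; the same substitution $g\mapsto gh$ shows $h^*\chi_1=\epsilon(h)\chi_1$, the twisted equivariance for which the lemma is intended.

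For the second assertion I assume $\chi$ taut, i.e. $\iota_V d\chi=0$ for every $V$ tangent to $\F$. Combining $d(g^*\chi)=g^*d\chi$ with the contraction identity $\iota_V(g^*\omega)=g^*(\iota_{g_*V}\omega)$, I obtain
\[
\iota_V\,d(g^*\chi)=g^*\bigl(\iota_{g_*V}d\chi\bigr)=0,
\]
since $g_*V$ is tangent to $\F$; hence every $\epsilon(g)\,g^*\chi$ is taut. Because $G$ is compact and acts smoothly, both $d$ and contraction with $V$ commute with the Haar integral, so
\[
\iota_V\,d\chi_1=\int_{g\in G}\epsilon(g)\,\iota_V\,d(g^*\chi)\,dg=0,
\]
and therefore $\chi_1$ is taut.

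I expect the main obstacle to be the sign bookkeeping of the first step: one must check that the orientation character $\epsilon$ aligns the signs of $\chi(g_*V)$ across all of $G$ so that the integrand stays strictly positive, which is what guarantees $\chi_1(V)\neq0$ and hence that $\chi_1$ is genuinely a characteristic form rather than a degenerate $1$-form. By contrast, the interchange of $d$ and $\iota_V$ with the Haar integral and the contraction identity are routine consequences of the compactness of $G$ and the smoothness of the action.
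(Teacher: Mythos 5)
Your proof is correct and follows essentially the same route as the paper's: the $\epsilon$-twist makes the averaged form strictly positive on positively oriented vectors tangent to $\F$, which gives the first part, and tautness follows from Rummler's formula~\eqref{formula:rummler} because $d$ and contraction with $V$ commute with the Haar integral, which gives the second. The paper's own proof is a terse two-line version of exactly this argument; your write-up merely supplies the details it leaves implicit (the dimension-one characterization of characteristic forms as $1$-forms positive on $T\F$, and the metric-independence of the criterion $\iota_V d\chi=0$).
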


\begin{proof}
It is easy to see that $\chi_1$ is positive on $T\F$ and $\ker\chi=\ker\chi_1$, which implies the first part. The latter part follows from Rummler's formula~\eqref{formula:rummler} and $d\chi_{1} = \int_{g \in G} \epsilon(g) (g^{*} d\chi) dg$.
\end{proof}

\begin{remark}
As we saw in the last lemma, the sum of two taut characteristic forms is taut, while the sum of two tense characteristic forms may not be tense. We can use this phenomenon to show Theorem~\ref{thm:tense2} by taking a covering of $(M,\F)$ which is taut. Strongly tenseness of characteristic forms is not linear in a direct way. But there is a certain way to make the sum of two strongly tense characteristic forms to obtain a strongly tense one by using the interpretation of strongly tenseness as a twisted version of tautness (see~\cite[Proposition~7.8]{Nozawa 2}).
\end{remark}

\begin{proof}[Proof of Theorem~\ref{thm:tense2}]
By Proposition~\ref{Proposition : Retraction 1}, the structure group of $\pi$ can be reduced to $\Gamma \ltimes \mathbb{T}^k$, where $\Gamma$ is a subgroup of $\GL_{v}(k;\ZZ)$. Let $p : (M',\mathcal{F}') \to (M,\mathcal{F})$ be the covering of $(M,\mathcal{F})$ such that $\pi_{1}M' \cong \ker \phi_M$, whose covering group is identified with $\Gamma$. For any $G$-orbit $K$ in $M$, we have $\pi_{1}K\subset \pi_{1}M'$ by Lemma~\ref{Lemma: holonomy}. Thus the $G$-action on $M$ lifts to a $G$-action $\psi_{G}$ on $M'$. The $\Gamma$-action on $M'$ via deck transformations commutes with $\psi_{G}$.

By construction, $M'$ has a structure of a principal $\mathbb{T}^{k}$-bundle such that $\F'$ is the orbit foliation of a dense $\RR$-subaction of the principal $\mathbb{T}^{k}$-action $\rho_{0}$. Let $X$ be a vector field which generates the dense $\RR$-subaction of $\rho_{0}$. Let $\chi$ be a $\rho_{0}$-invariant characteristic form of $(M',\F')$ such that $\chi(X)=1$. The latter part of Lemma~\ref{lem:char} implies that $\chi$ is taut. Let $\chi_{1}$ be the characteristic form of $(M',\F')$ obtained from $\chi$ like in Lemma~\ref{lemma:chi1}. Let $X_{1}$ be the vector field tangent to $\F'$ such that $\chi_{1}(X_{1})=1$. Since $\chi$ is taut, by Lemma~\ref{lemma:chi1}, so is $\chi_{1}$. Thus $X_{1}$ is a Killing vector field with respect to a Riemannian metric on $M'$. Then the closure of the flow generated by $X_{1}$ is a principal $\mathbb{T}^k$-action $\rho_{1}$ on $M'$. Since $X_{1}$ is $G$-invariant up to sign determined by $\epsilon$ in Lemma~\ref{lemma:chi1}, $\psi_{G}$ and $\rho_{1}$ yield a $(G \ltimes \mathbb{T}^{k})$-action on $M'$, where the semidirect product is defined by $\epsilon : G \to \{\pm 1\}$.
Here $\rho_{1}$ and the $\Gamma$-action on $M'$ yield a $(\Gamma \ltimes \mathbb{T}^k)$-action on $M'$, because the commutativity of the $\Gamma$-action with $\psi_{G}$ implies, for $h\in \Gamma$,
$$
h_{*}X_{1} = \int_{g \in G} (h_{*}g_{*} X) dg = \int_{g \in G} (g_{*} h_{*}X) dg = \phi_M(h) \int_{g \in G} (g_{*} X) dg= \phi_M(h)X_1\;.
$$
In total, we get a $\big((G \times \Gamma) \ltimes \mathbb{T}^k \big)$-action on $M'$ such that $\F'$ is the orbit foliation of a dense $\RR$-subaction of the principal $\mathbb{T}^{k}$-action $\rho_{1}$.

Let $\chi_{2}$ be a $\Gamma$-invariant characteristic form of $(M',\F')$. Using the $\big((G \times \Gamma) \ltimes \mathbb{T}^k \big)$-action on $M'$ obtained in the last paragraph, let
$$
\chi_{3} = \int_{u \in G \ltimes \mathbb{T}^k} \epsilon'(u) (u^{*} \chi_{2}) du\;,
$$
where $du$ is a Haar measure of $G \ltimes \mathbb{T}^k$ and $\epsilon'(u) = 1$ if $u$ preserves the orientation of $\F'$ and $\epsilon'(u) = -1$ otherwise. Then $\chi_{3}$ is a $(\Gamma \ltimes \mathbb{T}^k)$-invariant characteristic form, which is strongly tense by Lemma~\ref{lem:char}. Thus, $(M',\F')$ admits a $(G \times \Gamma)$-invariant strongly tense metric, which induces a $G$-invariant strongly tense metric on $(M,\F)$.

We show the latter part. Assume that $\F$ is nowhere tangent to $G$-orbits. Then we can take a characteristic form $\chi_{2}$ on $(M,\F)$ so that every $G$-orbit is tangent to $\ker \chi_{2}$. We conclude the proof of the latter part by constructing $\chi_{3}$ from this $\chi_{2}$ like in the last paragraph.
\end{proof}

\end{document}